\UseRawInputEncoding %arxiv
\documentclass[11pt,a4paper]{article}
\usepackage{amsfonts}
\usepackage{amssymb}
\usepackage{mathrsfs}
\usepackage{amsmath}
\usepackage{booktabs}
\usepackage{epsf,epsfig,amsfonts,amsgen,indentfirst}
\usepackage{amsmath,amstext,amsbsy,amsopn,amsthm,bbding,wasysym}
\usepackage{multicol,mathdots}
\usepackage{subfigure}
\usepackage[numbers,sort&compress]{natbib}
\allowdisplaybreaks

\newtheorem{theorem}{Theorem}[section]

\newtheorem{lemma}[theorem]{Lemma}

\newtheorem{definition}{Definition}[section]

\newtheorem{conjecture}[theorem]{Conjecture}

\begin{document}
\title
{\LARGE \textbf{On the Tutte polynomial of series-parallel posets
\thanks{Supported by NSFC (Nos. 12261071,  12571019) and NSF of Qinghai Province (No. 2025-ZJ-902T).} }}

\author{Jianxuan Luo$^a$, Tingzeng Wu$^{b,c}$\thanks{{Corresponding author.\newline
\emph{E-mail addresses}: mathtzwu@163.com, jianxuanluo@163.com, hjlai2015@hotmail.com
}}, Hong-Jian Lai$^{d}$ \\
{\small $^{a}$ School of Mathematical Sciences, Xiamen University,}\\
{\small  Xiamen, Fujian 361005,  P.R.~China} \\
{\small $^{b}$ School of Mathematics and Statistics, Qinghai Minzu University, }\\
{\small  Xining, Qinghai 810007, P.R.~China} \\
{\small $^{c}$ Qinghai Institute of Applied Mathematics, Xining, Qinghai 810007, P.R.~China}\\
{\small $^{d}$ School of Mathematics and System Sciences, Guangdong Polytechnic Normal University,}\\
{\small  Guangzhou, Guangdong 510665, P.R.~China}}
\date{}

\maketitle
\noindent {\bf Abstract:}
The Tutte polynomial is a bivariate polynomial that has been extensively studied in graph and matroid theory.
%Gordon first studied the Tutte polynomial of the greedoid induced by a poset $P$, denoted by $T(P;x,y)$.
%Let $P$ be a poset, and let $T(P;x,y)$ denote its Tutte polynomial.
%A poset is called series-parallel if it can be constructed recursively from the one-element poset $\mathbf{1}$ using direct sums and ordinal sums.
Gordon was the first to study the Tutte polynomial $T(P;x,y)$ of the greedoid induced by a poset $P$, 
including the special case of series-parallel posets.
In particular, Gordon and McMahon conjectured that, for any two series-parallel posets $P$ and $Q$, the equality $T(P;x,y)=T(Q;x,y)$ holds if and only if $P\cong Q$.
In studying this conjecture, Gordon introduced a subclass $\mathcal{P}$ of series-parallel posets and proved that this equivalence holds for all $P,Q\in\mathcal{P}$.
%In this paper, we investigate this conjecture by introducing a new subclass $\mathrm H$ of series-parallel posets 
%and proving that $\mathcal{P}\subseteq\mathrm H$. We extend Gordon's result by allowing $P$ to belong to $\mathrm H$ and $Q$ to be an arbitrary series-parallel poset,
%Using unique factorization of polynomials together with the decompositions of posets into direct sums and ordinal sums,
%and prove that, for every $P\in\mathrm H$ and every series-parallel poset $Q$, the equality $T(P;x,y)=T(Q;x,y)$ holds if and only if $P\cong Q$.
%Consequently, Gordon's result for the subclass $\mathcal{P}$ is a special case of our theorem.
In this paper, we introduce a new subclass $\mathrm H$ of series-parallel posets and prove that $\mathcal P\subseteq\mathrm H$.
Moreover, we show that, for every $P\in\mathrm H$ and every $Q\in\mathrm{SP}$, the equality $T(P;x,y)=T(Q;x,y)$ holds if and only if $P\cong Q$,
thereby extending Gordon's result.

%\smallskip
\noindent {\bf Keywords:}  Series-parallel posets; Greedoids; Tutte polynomial; Irreducibility; Poset isomorphism \\
\noindent {\bf AMS subject classifications:}  06A07; 05B35

\section{Introduction}

Unless otherwise stated, all graphs are finite, simple, and undirected.
All posets considered in this paper are finite.
For a graph $G$, let $V(G)$ and $E(G)$ denote its vertex set and edge set, respectively, 
and let $\overline{G}$ denote its complement.
A partially ordered set (poset) $P$ is a set equipped with a binary relation $\leq$ that is reflexive, antisymmetric, and transitive.
We also use $P$ to denote its ground set and write $|P|$ for its size.
For any $u,v\in P$, we write $u<v$ if $u\leq v$ and $u\neq v$.
Two elements $u,v\in P$ are called comparable if $u\leq v$ or $v\leq u$.
Otherwise, they are called incomparable.
A subset of $P$ is called a chain if its elements are pairwise comparable,
and an antichain if any two distinct elements in it are incomparable.
Let $\mathbf{1}$ denote the one-element poset.
Two posets $P$ and $Q$ are isomorphic as posets, written
$P\cong Q$, if there exists a bijection $\varphi:P\to Q$ such that,
for all $u,v\in P$, we have $u\leq v$ in $P$ if and only if
$\varphi(u)\leq\varphi(v)$ in $Q$.
For a set $X$, let $2^X$ denote its power set.
We use $y$ as an additional indeterminate, and $\deg_x F$
denotes the degree of a nonzero polynomial $F$ in $x$.
Furthermore, let $\mathbb{Z}$ and $\mathbb{Q}$ denote the ring of integers and the field of rational numbers, respectively, 
and let $\mathbb{Q}[x,y]$ denote the polynomial ring in the indeterminates $x$ and $y$ over $\mathbb{Q}$.

The Tutte polynomial is a bivariate polynomial that has been extensively studied in graph and matroid theory
\cite{BauerEtAl2026, BrylawskiOxley1992, ChaudharyGordon1991, ColbournProvanVertigan1995}.
Gordon and McMahon \cite{GordonMcMahon1989} extended the Tutte polynomial to greedoids:

\begin{definition}\label{def:greedoid}
A greedoid $\Gamma$ is an ordered pair $(E,\mathcal F)$, where $E$ is a finite set 
and $\mathcal F$ is a nonempty family of subsets of $E$ satisfying the following axioms:
\begin{enumerate}
 \item[\textup{(i)}] $\varnothing\in\mathcal F$;
 \item[\textup{(ii)}] For any $F,F'\in\mathcal F$, if $|F'|<|F|$, 
 then there exists $x\in F\setminus F'$ such that $F'\cup\{x\}\in\mathcal F$.
\end{enumerate}
\end{definition}

The set $E$ is called the ground set of $\Gamma$, and the members of
$\mathcal F$ are called the feasible sets of $\Gamma$.
The rank of a subset $A\subseteq E$ is defined by
\begin{eqnarray*}
\rho_{\Gamma}(A)=\max\{|A'|:A'\subseteq A,\ A'\in\mathcal F\},
\end{eqnarray*}
and we set $\rho(\Gamma)=\rho_{\Gamma}(E)$.
% Generalizing the definition of the Tutte polynomial of a matroid,
% McMahon and Gordon defined the Tutte polynomial of a greedoid
% in \cite{GordonMcMahon}.
The Tutte polynomial of a greedoid $\Gamma$ with ground set $E$
and rank function $\rho_\Gamma$ is defined by
\begin{eqnarray*}
T(\Gamma;x,y)
= \sum_{A\subseteq E}
(x-1)^{\rho(\Gamma)-\rho_\Gamma(A)}
(y-1)^{|A|-\rho_\Gamma(A)}.
\end{eqnarray*}
For related work on matroids and on the Tutte polynomials of matroids and greedoids, see
\cite{Bjorner1985, GeelenZhou2006, GordonMcMahon2022, KnappNoble2025, LuoWuLai2026, McMahon1993, Zhou2004, Zhou2018}.
In particular, Gordon \cite{Gordon1993} studied the Tutte polynomial of the greedoid
associated with the order ideals of a poset.
For a poset $P$, a subset $I\subseteq P$ is an order ideal if,
whenever $u\in I$ and $v\in P$ satisfy $v\leq u$, we have $v\in I$.
Let $\mathcal J(P)$ be the family of all order ideals of $P$.
The pair $\Gamma_P=(P,\mathcal J(P))$ is a greedoid.
A greedoid whose feasible sets are closed under unions is
called an antimatroid. Thus, $\Gamma_P$ is the poset antimatroid associated with $P$. 
For $S\subseteq P$, its rank is
\begin{eqnarray*}
\rho_P(S)=\rho_{\Gamma_P}(S)=\max\{|I|:I\subseteq S,\ I\in\mathcal J(P)\}.
\end{eqnarray*}
Since $P$ is itself an order ideal, $\rho(\Gamma_P)=|P|$. We define
\begin{eqnarray*}
T(P;x,y)=T(\Gamma_P;x,y)=\sum_{S\subseteq P}(x-1)^{|P|-\rho_P(S)}(y-1)^{|S|-\rho_P(S)}.
\end{eqnarray*}

Gordon \cite{Gordon1993} focused in particular on series-parallel posets
and their various subclasses.
We now introduce the definition of a series-parallel poset:

\begin{definition}\label{def:poset-sums-dual}
Let $P$ and $Q$ be two disjoint posets.
\begin{enumerate}
\item The direct sum of $P$ and $Q$, denoted by $P+Q$, is the poset
on $P\cup Q$ in which $x\le y$ if either
\begin{enumerate}
    \item[\textup{(i)}] $x,y\in P$ and $x\le y$ in $P$ or
    \item[\textup{(ii)}] $x,y\in Q$ and $x\le y$ in $Q$.
\end{enumerate}

\item The ordinal sum of $P$ and $Q$, denoted by $P\oplus Q$, is the poset
on $P\cup Q$ in which $x\le y$ if either
\begin{enumerate}
    \item[\textup{(i)}] $x,y\in P$ and $x\le y$ in $P$ or
    \item[\textup{(ii)}] $x,y\in Q$ and $x\le y$ in $Q$ or
    \item[\textup{(iii)}] $x\in P$ and $y\in Q$.
\end{enumerate}

\item The dual poset $P^*$ has the same ground set as $P$, with
$x\le y$ in $P^*$ if and only if $y\le x$ in $P$.
\end{enumerate}
\end{definition}

%We let $\mathbf{1}$ be the one-element poset.
%A poset is a series-parallel poset if it can be built up recursively from $\mathbf{1}$ 
%by using the operations of direct sum and ordinal sum.
%The Hasse diagrams of the posets formed by the above operations
%can all be obtained from the Hasse diagrams of $P$ and $Q$ by straightforward techniques.
%The reader can consult \cite{Stanley1986} or work out the details directly.
%Finally, the dual $P^*$ of a poset $P$ is obtained by flipping the Hasse diagram of $P$, i.e., $x\leq y$ in $P^*$
%iff $y\leq x$ in $P$.

A nonempty poset is called series-parallel if it can be constructed,
up to isomorphism, from $\mathbf1$ by repeated applications of direct sums
and ordinal sums.
We denote the class of all series-parallel posets by $\mathrm{SP}$.

Series-parallel posets are of interest because their recursive structure
allows the development of many algorithms that run in polynomial time.
The recursive structure of series-parallel posets supports
efficient algorithms for several computational problems.
Valdes et al. \cite{Valdes1981} gave a linear-time recognition
algorithm for the corresponding series-parallel digraphs.
%Gordon \cite{Gordon1996} proved that the Tutte polynomial
%of a series-parallel poset can be computed in polynomial time.
Many other authors have also studied $\mathrm{SP}$ posets
for various purposes.
For example, Stanley \cite{Stanley1974} used P\'olya's theorem
to obtain a generating function for the number of $\mathrm{SP}$ posets
with $n$ elements.
On the other hand, let $N(P)$ denote the number of order ideals of $P$,
and let $N(x)$ denote the number of order ideals of $P$ that contain
the element $x$.
Faigle et al. \cite{Faigle1986} proved that an element $x$
satisfying $\frac14\leq N(x)/N(P)\leq\frac34$ can be found
efficiently in an $\mathrm{SP}$ poset, and that these bounds
are best possible. In contrast, Provan and Ball \cite{Provan1983} proved that
computing $N(P)$ for a general poset is a $\#\mathrm P$-complete problem.

Gordon and McMahon \cite{GordonMcMahon2022} considered the Tutte
polynomials of greedoids induced by series-parallel posets
and proposed the following conjecture:

\begin{conjecture}(\cite{GordonMcMahon2022})\label{conj:reconstruction}
Let $P$ and $Q$ be any two series-parallel posets. Then
\begin{eqnarray*}\label{eq:conj}
T(P;x,y)=T(Q;x,y)\quad\text{if and only if}\quad P\cong Q.
\end{eqnarray*}
\end{conjecture}

Gordon \cite{Gordon1996} obtained a partial result on
Conjecture \ref{conj:reconstruction} by proving it for a subclass
$\mathcal{P}$ of series-parallel posets.
To define this subclass, let
\begin{eqnarray*}
\mathbf{n}=\underbrace{\mathbf{1}+\mathbf{1}+\cdots+\mathbf{1}}_{n}.
\end{eqnarray*}
The subclass $\mathcal{P}$ is defined recursively by the following rules:
\begin{enumerate}
    \item[\textup{(i)}] The one-element poset $\mathbf{1}$
    belongs to $\mathcal{P}$.
    \item[\textup{(ii)}] If $P\in\mathcal{P}$, then
    $P\oplus\mathbf{n}\in\mathcal{P}$ for every integer $n\geq 1$.
    \item[\textup{(iii)}] If $P,Q\in\mathcal{P}$, then
    $P+Q\in\mathcal{P}$.
    \item[\textup{(iv)}] If $P\in\mathcal{P}$, then
    $P^*\in\mathcal{P}$.
\end{enumerate}

\begin{theorem}(\cite{Gordon1996})\label{thm:theorem33-43}
Let $P,Q\in\mathcal{P}$. Then
\begin{eqnarray*}
T(P;x,y)=T(Q;x,y)\quad\text{if and only if}\quad P\cong Q.
\end{eqnarray*}
\end{theorem}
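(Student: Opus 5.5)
The direction $P\cong Q\Rightarrow T(P;x,y)=T(Q;x,y)$ is immediate: an isomorphism maps order ideals to order ideals, so it preserves $\rho_P$ and hence every term of the defining sum. For the converse I would induct on $|P|$. I would first establish multiplicativity and recursion formulas for the two constructions. For the direct sum, order ideals of $P+Q$ are exactly the unions $I\cup J$ with $I\in\mathcal J(P)$ and $J\in\mathcal J(Q)$. Hence $\rho_{P+Q}(S)=\rho_P(S\cap P)+\rho_Q(S\cap Q)$, and the sum factors as
\begin{eqnarray*}
T(P+Q;x,y)=T(P;x,y)\,T(Q;x,y).
\end{eqnarray*}
For $P\oplus\mathbf n$, an ideal either lies inside $P$ or equals $P$ together with a subset of the top antichain. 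So for $S\subseteq P\oplus\mathbf n$ we get $\rho(S)=\rho_P(S\cap P)$ unless $S\supseteq P$, in which case $\rho(S)=|S|$. This gives the explicit formula
\begin{eqnarray*}
T(P\oplus\mathbf n;x,y)=(x-1)^{n}y^{n}\bigl(T(P;x,y)-1\bigr)+x^{n}.
\end{eqnarray*}
Here the $S\not\supseteq P$ terms contribute $(x-1)^n y^n$ times $(T(P)-1)$, and the $S\supseteq P$ terms contribute $\sum_k\binom nk (x-1)^{n-k}=x^n$. I would also record simple invariants readable from $T$: $|P|$ is the total degree data (for example, $T(P;x,y)$ evaluated suitably gives $2^{|P|}$ at $x=y=2$), and the number of minimal elements is related to $\deg$ or to the coefficient structure near $y=1$.

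The core step is to recover from $T(P)$ which constructor was applied last. If $P=P_1+\cdots+P_k$ with connected components, then $T(P)=\prod T(P_i)$. So the key claim is that $T(P)$ for a \emph{connected} $P\in\mathcal P$ is irreducible in $\mathbb Q[x,y]$, or at least that the factorization into such $T$'s is determined uniquely. A connected member of $\mathcal P$ is of the form $R\oplus\mathbf n$ or a dual $(R\oplus\mathbf n)^*=\mathbf n\oplus R^*$. For $R\oplus\mathbf n$, irreducibility should follow from the formula above: viewing it as $x^n+(x-1)^n y^n(T(R)-1)$, one can argue via Eisenstein-type or Newton-polygon considerations in $y$. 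Then unique factorization in $\mathbb Q[x,y]$ matches the components of $P$ and $Q$ up to order, and induction finishes the disconnected case. For the dual case I would compute $T(\mathbf n\oplus R)$ similarly: ideals are subsets of the bottom antichain, or the full antichain plus an ideal of $R$. This gives a formula of a recognizably different shape, for example one divisible by a different power pattern in $x-1$, and irreducibility would be handled analogously.

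In the connected case I must then recover $n$ and $R$ (and the type, $\oplus$ on top versus bottom) from $T$. For $R\oplus\mathbf n$, setting $y=1$ kills the second term only partly, so instead I would read off $n$ as the exponent of $(x-1)$ dividing $T-x^n$, or as the number of maximal elements, extracted from the lowest-order behaviour. Then $T(R)=1+(T-x^n)/((x-1)^n y^n)$ is determined, and induction gives $R$. In the dual case $n$ (the number of minimal elements) is visible directly, since it equals the rank of singleton-type data. The remaining task is distinguishing the two types when $Q$ also lies in $\mathcal P$.

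I expect the main obstacle to be the irreducibility of $T$ for connected posets in $\mathcal P$, together with separating the top-antichain and bottom-antichain types. For irreducibility I would first try reducing modulo a specialization, such as $x=1$ or $y=1$, to a polynomial in one variable whose factorization is controlled, and then lift using degree counting.
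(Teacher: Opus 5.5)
Your skeleton matches the route the paper takes through Theorems \ref{thm:main-sub} and \ref{thm:main}: induction on $|P|$, multiplicativity over direct sums with unique factorization, and peeling an antichain off a connected member. Your formula $T(R\oplus\mathbf n)=(x-1)^ny^n\bigl(T(R)-1\bigr)+x^n$ is also correct. But the two steps you flag as obstacles carry essentially all the content, and the proposal supplies neither.

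\emph{Irreducibility.} You do not prove that $T(P;x,y)$ is irreducible for connected $P\in\mathcal P$, and the tools you suggest fail.
Specializing at $x=1$ is useless, since $T(P;1,y)=1$ for every $P$.
Specializing at $y=1$ fails already for the chain $\mathbf 1\oplus\mathbf 1\oplus\mathbf 1$. Its polynomial $x+(x-1)^2y+(x-1)^3y^2$ is irreducible: it is primitive in $y$, and its discriminant $(x-1)^3(-3x-1)$ is not a square. Yet its value at $y=1$ is $x(x^2-2x+2)$, which factors.
A $y$-adic argument cannot work either. We have $T(R\oplus\mathbf n)\equiv x^n\pmod y$, and its leading $x$-coefficient is divisible by $y$, so by Hensel's lemma it always splits over $\mathbb Q((y))$.
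Regarded as a polynomial in $y$, its $(x-1)$-adic Newton polygon need not be a single segment. For $((\mathbf 1\oplus\mathbf 1)+(\mathbf 1\oplus\mathbf 1))\oplus\mathbf 2$ the lower hull has vertices $(0,0),(3,4),(4,6)$.
The paper does not derive this irreducibility from scratch either. Inside the proof of Theorem \ref{thm:main-sub} it imports two results of Gordon: $f(\mathbf n\oplus P;t,y)$ is irreducible, and irreducibility is preserved under $P\mapsto P^*$. It then runs a structural induction over the rules defining $\mathcal P$ to get $\mathcal P\subseteq\mathrm H$. Without these, your disconnected case has no foundation.

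\emph{The connected case.} The two shapes are not ``recognizably different'': $\mathbf m\oplus R\oplus\mathbf n$ is of both types. You give no way to compare $P=R\oplus\mathbf n$ with $Q=\mathbf m\oplus R'$.

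The missing idea is the one behind Lemma \ref{lem:ordinal-recovery}.
\begin{enumerate}
\item[(a)] By the antichain expansion, $T(P;t+1,1)$ counts order filters by size. The poset $R\oplus\mathbf n$ has exactly one filter of size $n$, so $Q$ does too.
\item[(b)] A unique filter $F$ of size $n$ forces $Q=(Q\setminus F)\oplus F$.
\item[(c)] Your computation generalizes verbatim to $T(A\oplus B)=(x-1)^{|B|}y^{|B|}\bigl(T(A)-1\bigr)+T(B)$. This gives $T(F)-x^n=(x-1)^ny^n\bigl(T(R)-T(Q\setminus F)\bigr)$.
\item[(d)] Since $(x-1)$ divides $T(R)-T(Q\setminus F)$, comparing $\deg_x$ on both sides yields $T(F)=x^n$ and $T(Q\setminus F)=T(R)$. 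Hence $F$ is an antichain.
\end{enumerate}
This settles the type question without any case split.

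Even then, your symmetric induction needs $Q\setminus F\in\mathcal P$, a closure property you would have to prove separately. The paper sidesteps this by proving the asymmetric Theorem \ref{thm:main}, with $P\in\mathrm H$ and $Q\in\mathrm{SP}$ arbitrary. There, pieces of $Q$ stay in $\mathrm{SP}$ by Lemma \ref{lem:graph-decomposition}\textup{(i)}, and Lemma \ref{lem:no-grouping} removes any need for irreducibility on the $Q$ side. The statement then follows because $\mathcal P\subseteq\mathrm H$.
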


This conjecture remains open.
In this paper, we establish a partial affirmative answer to Conjecture \ref{conj:reconstruction}.
We first introduce the notation needed to define the class $\mathrm H$.

Let $P\in\mathrm{SP}$.
For any $W\subseteq P$, let $P[W]$ denote the induced subposet of $P$
on the ground set $W$, whose order relation is defined as follows:
for any $s,t\in W$, we have $s\le t$ in $P[W]$ if and only if $s\le t$ in $P$.
Let $G=G(P)$ be the comparability graph of $P$, that is,
$V(G)=P$ and $E(G)=\{\{s,t\}:s,t\in P,\ s<t\}$.
Similarly, let $\overline G$ be the incomparability graph of $P$,
that is, $V(\overline G)=P$ and $E(\overline G)=\{\{s,t\}:s,t\in P,\ s\neq t,\ s\nleq t,\ t\nleq s\}$.
Thus, two distinct vertices are adjacent in $\overline G$ if and only if they are incomparable in $P$,
so $\overline G$ is precisely the complement of $G$.

For any $W\subseteq P$, let $G[W]$ and $\overline G[W]$ denote 
the subgraphs of $G$ and $\overline G$, respectively, induced by $W$.
For any graph $G$, let $\mathcal C(G)$ denote the set of all connected components of $G$.
Let $\mathcal W(P)\subseteq 2^P\setminus\{\varnothing\}$
be the smallest family with respect to inclusion such that
$P\in\mathcal W(P)$ and, for every $W\in\mathcal W(P)$ and every
$C\in\mathcal C(G[W])\cup\mathcal C(\overline G[W])$, we have $V(C)\in\mathcal W(P)$.

\begin{definition}\label{def:poset-h}
Define $\mathrm H$ to be the class of all posets $P\in\mathrm{SP}$
satisfying the following condition:
for every $W\in\mathcal W(P)$, if $G(P[W])$ is disconnected,
then, for every $C\in\mathcal C(G(P[W]))$,
the polynomial $T(P[V(C)];x,y)$ is irreducible in $\mathbb Q[x,y]$,
where $\mathbb Q$ denotes the field of rational numbers.
\end{definition}

The inclusion $\mathcal P\subseteq\mathrm H$ is not immediate and we prove the following theorem:

\begin{theorem}\label{thm:main-sub}
Let $\mathcal P$ and $\mathrm H$ be defined as above.
Then $\mathcal P\subseteq\mathrm H$.
\end{theorem}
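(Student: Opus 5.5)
The plan is to prove a stronger, hereditary statement by induction on $|P|$. For every $P\in\mathcal P$ and every $W\in\mathcal W(P)$ we show two things: (a) $P[W]$ is isomorphic to a member of $\mathcal P$, and (b) if $P$ is a member of $\mathcal P$ whose comparability graph is connected, then $T(P;x,y)$ is irreducible in $\mathbb Q[x,y]$. Together these give $\mathcal P\subseteq\mathrm H$. If $G(P[W])$ is disconnected, each component $C$ is itself in $\mathcal W(P)$, so $P[V(C)]\in\mathcal P$ by (a). Moreover $P[V(C)]$ has connected comparability graph, so its Tutte polynomial is irreducible by (b).

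\emph{Step 1: structure.} By induction on the recursive definition of $\mathcal P$, every $P\in\mathcal P$ is of one of three kinds:
\begin{itemize}
\item a direct sum $P_1+\cdots+P_k$ with $k\ge2$ of members of $\mathcal P$ with connected comparability graph;
\item an ordinal-sum-type poset $R\oplus\mathbf n$ or $\mathbf n\oplus R$ with $R\in\mathcal P$ (the second arises from rule (iv), since $(R\oplus\mathbf n)^*=\mathbf n\oplus R^*$);
\item $\mathbf 1$.
\end{itemize}
The components of $G$ of a direct sum are its summands, and the components of $\overline G$ of $R\oplus\mathbf n$ (resp. $\mathbf n\oplus R$) are the components of $\overline G(R)$ together with the antichain $\mathbf n$. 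Hence the family of posets induced on the sets $V(C)$ stays inside $\mathcal P$ up to isomorphism, and iterating this over the recursive definition of $\mathcal W(P)$ gives (a). Some care is needed to track $\overline G$-components of $R$ when $R$ is itself an ordinal sum, but this is a routine induction using closure of $\mathcal P$ under duals.

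\emph{Step 2: formulas.} The order ideals of $P\oplus Q$ are the ideals of $P$ together with the sets $P\cup J$ for $J\in\mathcal J(Q)$. Hence $\rho_{P\oplus Q}(S)=|P|+\rho_Q(S\cap Q)$ if $P\subseteq S$, and $\rho_{P\oplus Q}(S)=\rho_P(S\cap P)$ otherwise. Summing over $S$ gives
\[
T(P\oplus Q;x,y)=T(Q;x,y)+(x-1)^{|Q|}y^{|Q|}\bigl(T(P;x,y)-1\bigr),
\]
and similarly $T(P+Q)=T(P)T(Q)$ and $T(\mathbf n)=x^n$. Also $T(P;1,y)=1$, because only $S=P$ has full rank, so $T(P)-1=(x-1)g_P$ for some polynomial $g_P$. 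Therefore
\[
T(R\oplus\mathbf n)=x^n+(x-1)^{n+1}y^ng_R,\qquad T(\mathbf n\oplus R)=T(R)+(x-1)^{|R|}y^{|R|}(x^n-1).
\]

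\emph{Step 3: irreducibility, the main obstacle.} For $T(R\oplus\mathbf n)$, suppose $T=AB$ with both factors nonconstant. Setting $x=1$ gives $A(1,y)B(1,y)=1$, so both specializations are constants. Setting $y=0$ gives $AB\equiv x^n \pmod y$, so $A\equiv\alpha x^a$ and $B\equiv\beta x^b$ modulo $y$. I would then compare Newton polygons in the variables $(x-1,y)$. All non-$x^n$ monomials carry the factor $(x-1)^{n+1}y^n$, which forces one factor to have no $y$-dependence. Such a factor would divide $x^n$, but no power of $x$ divides $T$, because $T(0,y)=1+(-1)^{n+1}y^ng_R(0,y)$ is nonzero. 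This rules out the factorization.

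The harder case is $\mathbf n\oplus R$, since $x^n-1$ itself factors. Here I would write $x=1+z$ and view $T$ as a polynomial in $y$ over $\mathbb Q[z]$. Its degree in $y$ is governed by the term $z^{|R|}y^{|R|}(x^n-1)$, together with the known top-$y$ behaviour of $T(R)$: for instance $\deg_y T(R)<|R|$ when $R$ has a proper nonempty ideal structure. I would then try an Eisenstein-type argument at the prime $z=x-1$. Every coefficient except the one coming from $T(R)|_{x=1}=1$ is divisible by $z$, and the constant term is not divisible by $z^2$. If this fails, the fallback is to induct using the explicit form of $T(R)$ supplied by Step 1, with $R=R'\oplus\mathbf m$ or a direct sum, and to track the lowest-order terms in $z$. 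I expect this dual-ordinal-sum case to consume most of the effort.
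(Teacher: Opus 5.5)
Your reduction is sound. Proving (a) and (b) does give $\mathcal P\subseteq\mathrm H$. Step 1 is essentially what the paper does with the inclusions $\mathcal W(P+Q),\mathcal W(P\oplus Q)\subseteq\{P\cup Q\}\cup\mathcal W(P)\cup\mathcal W(Q)$ and $\mathcal W(P^*)=\mathcal W(P)$, and your Step 2 formulas are correct. However, the whole content of the theorem is (b): $T(R\oplus\mathbf n;x,y)$ and $T(\mathbf n\oplus R;x,y)$ must be irreducible for every nonempty $R\in\mathcal P$. Step 3 proves neither.

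For $R\oplus\mathbf n$, the inference ``every non-$x^n$ monomial carries $(x-1)^{n+1}y^n$, so one factor is $y$-free'' is false for polynomials of that shape. For example, with $n=1$,
\[
\bigl(1+(x-1)^2y\bigr)\bigl(x-x(x-1)^2y+(x-1)^2y^2\bigr)=x+(x-1)^2y\bigl((1-x(x-1)^2)y+(x-1)^2y^2\bigr),
\]
and both factors depend on $y$. A $(x-1)$-adic Newton polygon also does not settle the case for members of $\mathcal P$. Take $R=(\mathbf 1\oplus\mathbf 1)+(\mathbf 1\oplus\mathbf 1)$ and $n=1$. Then $T=x+(x-1)^2(x+1)y+2x(x-1)^3y^2+(x-1)^5y^3$. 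The $y$-coefficients have valuations $0,2,3,5$, so the polygon has two edges, $(0,0)\to(2,3)\to(3,5)$. Separately, $T(R\oplus\mathbf n;0,y)=(-1)^{n+1}y^ng_R(0,y)$, not $1+\cdots$, because $T(R\oplus\mathbf n;x,0)=x^n$. Your conclusion $x\nmid T$ survives only because $g_R(0,0)=1$.

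For $\mathbf n\oplus R$, the Eisenstein conditions you describe do not hold. Put $z=x-1$ and let $m$ be the number of maximal elements of $R$. The $y^0$-coefficient $(1+z)^m$ is a $z$-adic unit, and the top coefficient $z^{|R|}((1+z)^n-1)$ is divisible by $z^{|R|+1}$.

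What does work here is the Eisenstein--Dumas criterion. For $1\le k<|R|$, the $y^k$-coefficient comes only from nonempty antichains with $|I^*(A)|=k+|A|$, so its $z$-adic valuation is at least $k+1$. For $k=|R|$ the valuation is exactly $|R|+1$. Hence the Newton polygon is the single edge from $(0,0)$ to $(|R|,|R|+1)$, which has no interior lattice points. Together with primitivity, this gives irreducibility for every nonempty $R$, which is the result of Gordon that the paper cites.

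The case $R\oplus\mathbf n$ still needs a missing idea. The paper invokes Gordon's theorem that $T(P;x,y)$ is irreducible if and only if $T(P^*;x,y)$ is, and applies it with $(R\oplus\mathbf n)^*\cong\mathbf n\oplus R^*$. Without that duality statement or a genuine substitute, (b) is unproved, and so is the theorem.
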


%Next, we give the main theorem. In fact,
%combining the above inclusion with our main theorem \ref{thm:main}, we obtain Gordon's result
%\cite{Gordon1996}, stated in Theorem \ref{thm:theorem33-43}, as a special case of Theorem \ref{thm:main}.
Our main result is the following theorem. Together with Theorem \ref{thm:main-sub}, it recovers
Theorem \ref{thm:theorem33-43} as a special case.

\begin{theorem}\label{thm:main}
Let $P\in\mathrm H$ and $Q\in\mathrm{SP}$ be arbitrary. Then
\begin{eqnarray*}
T(P;x,y)=T(Q;x,y)\quad\text{if and only if}\quad P\cong Q.
\end{eqnarray*}
\end{theorem}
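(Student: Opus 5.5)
The "if" direction is immediate, since isomorphic posets have isomorphic poset antimatroids. For the converse I would argue by strong induction on $|P|$. The plan rests on two structural facts about $T(P;x,y)$ for series-parallel posets, both of which I would establish first.

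\emph{Fact 1 (direct sums).} We have $T(P+Q;x,y)=T(P;x,y)\,T(Q;x,y)$. This holds because an order ideal of $P+Q$ inside $S$ is a union of ideals of $P$ and of $Q$. Hence $\rho_{P+Q}(S)=\rho_P(S\cap P)+\rho_Q(S\cap Q)$, and the defining sum factors.

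\emph{Fact 2 (ordinal sums).} For $P\oplus Q$ we get $\rho(S)=\rho_P(S)$ unless $P\subseteq S$, in which case $\rho(S)=|P|+\rho_Q(S\cap Q)$. This yields an explicit formula
\[
T(P\oplus Q)=T(P)\,(x-1)^{|Q|}y^{|Q|}-(x-1)^{|Q|}y^{|Q|}\,(\text{contribution of }S\supseteq P)+T(Q),
\]
that is, a formula of the shape $T(P\oplus Q)=(x-1)^{|Q|}y^{|Q|}\bigl(T(P)-(y-1)^{0}\cdots\bigr)+T(Q)$. I would compute it carefully. From Fact 2 I would derive the key consequence: if $G(P)$ is connected and $|P|\ge 2$, then $T(P;x,y)$ is not divisible in a way compatible with a direct-sum decomposition. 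More precisely, I would show that connectedness of the comparability graph can be read off from $T$. Concretely, I would show that the number of minimal elements (equivalently the $x$-degree structure) and some coefficient pattern detect whether $P$ is an ordinal sum. For example, evaluating the term $(x-1)^{|P|}$ coming from sets $S$ of rank $0$, the polynomial $T(P;1,y)$ or the low-order behaviour in $x-1$ encodes the number of minimal elements of $P$ and the minimal elements of each direct summand.

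For the main induction, suppose $P\in\mathrm H$, $Q\in\mathrm{SP}$ and $T(P)=T(Q)$. Note that $|P|=|Q|=\deg_x T$. There are two cases.

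\emph{Case 1: $G(P)$ is disconnected.} Write $P=P_1+\dots+P_k$ over the components. By $\mathrm H$ (with $W=P$) each $T(P_i)$ is irreducible, and $T(P)=\prod T(P_i)$. Decompose $Q=Q_1+\dots+Q_m$ into its comparability components. Each $T(Q_j)$ is nonconstant, so unique factorization in $\mathbb Q[x,y]$ forces $m\le k$, and each $T(Q_j)$ is a product of some of the $T(P_i)$. To get $m=k$ I would use a lemma that $T$ of a poset whose comparability graph is connected and has at least two elements has a specific irreducibility-compatible invariant. The simplest route is: such a $Q_j$ has a unique ``minimal layer'' structure, so that $T(Q_j;x,y)$ evaluated along a suitable specialization counts minimal elements additively over direct summands, with exactly one direct summand detected. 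Failing that, I would count direct summands via the multiplicity of the factor $x$ (or of the $(x-1)$-adic behaviour at $y=1$), since $T(\mathbf 1)=x$ and $T(R)$ for connected $R$ presumably contributes a fixed amount. After matching $T(Q_j)=T(P_{\sigma(j)})$, each $P_i$ lies in $\mathrm H$, because $\mathcal W(P_i)\subseteq\mathcal W(P)$. Induction then gives $P_i\cong Q_{\sigma(i)}$, and so $P\cong Q$.

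\emph{Case 2: $G(P)$ is connected.} Then $P$ is an ordinal sum, and $\overline G(P)$ is disconnected with components $A_1,\dots,A_r$ arranged as $P=A_1\oplus\cdots\oplus A_r$. I first show from the invariant above that $Q$ is also an ordinal sum. Then I use Fact 2 iteratively to recover $T(A_1)$ and $T(A_2\oplus\cdots\oplus A_r)$ from $T(P)$. The term $T(\text{top})$ and the prefactor $(x-1)^{|top|}y^{|top|}$ should let one peel off the bottom block, whose size is identified, for example, from the lowest power of $(x-1)$ in the correction part. Each peeled piece corresponds to a member of $\mathcal W(P)$ and so stays in $\mathrm H$, and induction applies.

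I expect the main obstacle to be exactly this recovery step: showing that $T$ determines the decomposition type (direct versus ordinal) and, in the ordinal case, uniquely splits $T(P)$ into the $T(A_i)$ even though $Q$ is only assumed series-parallel. I would first try to use the fact that $T(P;x,y)$ restricted to $S$ containing no minimal element gives $(x-1)^{|P|}y^{|P|-m}$-type terms, where $m$ is the number of minimal elements, in order to read off the bottom ordinal block.
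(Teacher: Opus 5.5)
You have the paper's architecture: induction on $|P|$, unique factorization against the irreducible factors $T(P_i)$ that $\mathrm H$ guarantees when $G(P)$ is disconnected, and recovery of the ordinal blocks when $G(P)$ is connected. You also correctly locate the crux. But the proposal never supplies a tool that resolves it; it gestures at ``some coefficient pattern'', and the concrete candidates you name fail:
\begin{itemize}
\item $T(P;1,y)=1$ for every poset, so it carries no information.
\item The number $m$ of minimal elements is determined by $T$: the coefficient of $(x-1)^{|P|}$ is $y^{|P|-m}$. But it is additive over direct sums and blind to connectivity. For example, $\mathbf 2\oplus\mathbf 1$ and $\mathbf 1+(\mathbf 1\oplus\mathbf 1)$ both have three elements and two minimal elements, yet only the first is connected.
\item The multiplicity of the factor $x=T(\mathbf 1)$ only tracks singleton components.
\end{itemize}
None of these excludes the scenario that actually threatens your Case 1: a connected component $Q_j$ with $T(Q_j)=T(P_1)T(P_2)$. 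Minimal-element counts are perfectly consistent with such a grouping, so you get neither $m=k$ nor the matching. In Case 2 you cannot show that $Q$ is an ordinal sum, nor identify its block sizes. Reading the block size from ``the correction part'' is circular, since that part is only defined relative to an already known split.

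The missing idea is the one the paper uses. Setting $y=1$ in the antichain expansion gives $T(P;t+1,1)=\sum_k b_k(P)t^k$, where $b_k(P)$ is the number of order filters of size $k$. Gordon's lemma, in dual form, says a unique filter of size $0<k<|R|$ forces $R=R'\oplus R''$ with $|R''|=k$, and conversely. Hence a series-parallel $P$ with $|P|\ge2$ has $G(P)$ connected if and only if $b_k(P)=1$ for some $0<k<|P|$.

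This kills grouping immediately. Suppose $T(Q_j)=\prod_{i\in I}T(P_i)$ with $|I|\ge2$, and let $U=\sum_{i\in I}P_i$. Then $b_k(U)=b_k(Q_j)=1$ for some $0<k<|U|$, so $U$ is an ordinal sum and hence connected, although it is a direct sum of at least two nonempty posets.

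In the connected case, $\{k:0<k<n,\ b_k=1\}$ is exactly the set of sizes of top parts of ordinal splittings. So $P$ and $Q$ have the same number of ordinal blocks, with the same sizes. Now compute your Fact 2 correctly: $T(A\oplus B)=T(B)+\bigl((x-1)y\bigr)^{|B|}\bigl(T(A)-1\bigr)$. Here $T(B)$ is exactly the part of $(x-1)$-degree at most $|B|$, so you can peel off blocks one at a time and obtain $T(P_i)=T(Q_i)$ for all $i$. That would be a self-contained substitute for the paper's citation of Gordon's ordinal-recovery lemma.

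One small correction. Intermediate pieces such as $A_2\oplus\cdots\oplus A_r$ are not members of $\mathcal W(P)$ when $r\ge3$. So either apply the induction only to the blocks $A_i$, which are members, or argue separately that such pieces lie in $\mathrm H$.
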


%\begin{remark}
%The inclusion $\mathcal P\subseteq\mathrm H$ is not immediate and is proved in Theorem \ref{thm:main-sub}.
%Combining this inclusion with our main theorem \ref{thm:main}, we obtain Gordon's result
%\cite{Gordon1996}, stated in Theorem \ref{thm:theorem33-43}, as a special case of Theorem \ref{thm:main}.
%\end{remark}

This paper focuses on Conjecture \ref{conj:reconstruction}.
%In Section 2, we introduce several lemmas needed for the proof of the main theorem.
%In Section 3, we establish the inclusion $\mathcal P\subseteq\mathrm H$.
%Furthermore, we prove that, for every $P\in\mathrm H$ nd every $Q\in\mathrm{SP}$, the equality
%$T(P;x,y)=T(Q;x,y)$ holds if and only if $P\cong Q$.
%Furthermore, we establish the inclusion $\mathcal P\subseteq\mathrm H$,
%showing that the subclass $\mathrm H$ of series-parallel posets
%constructed in this paper contains $\mathcal P$.
%We thus extend Gordon's result \cite{Gordon1996}, stated in Theorem \ref{thm:theorem33-43},
%by replacing the assumptions $P,Q\in\mathcal P$ with $P\in\mathrm H$ and $Q\in\mathrm{SP}$.
%Consequently, Gordon's result for the subclass $\mathcal P$ is recovered as a special case of our main theorem.
The remainder of this paper is organized as follows.
Section 2 presents the lemmas needed for the main results.
Section 3 proves Theorems \ref{thm:main-sub} and \ref{thm:main}.
Section 4 concludes the paper.

\section{Some lemmas}

In this section, we retain the notation introduced in Section 1
and present several lemmas needed for the proof of the main theorem.

For a graph $G$, let $c(G)=|\mathcal C(G)|$ denote its number of connected components.
A poset $P$ is called connected if $c(G(P))=1$.
By the definition of an induced subposet, for every nonempty $W\subseteq P$, 
we have $G(P[W])=G(P)[W]$ and $\overline{G(P[W])}=\overline{G(P)}[W]$.
Let $G$ and $H$ be undirected graphs with disjoint vertex sets,
and denote their disjoint union by $G\cup H$.
The join of $G$ and $H$, denoted by $G\vee H$, has vertex set $V(G)\cup V(H)$ and edge set
\begin{eqnarray*}
E(G\vee H)
=E(G)\cup E(H)\cup
\bigl\{\{u,v\}:u\in V(G),\ v\in V(H)\bigr\}.
\end{eqnarray*}
By the definitions of the direct sum and the ordinal sum,
for posets $P$ and $Q$ with disjoint ground sets, we have
\begin{eqnarray}\label{eq:graph-sums}
G(P+Q)=G(P)\cup G(Q),\qquad
G(P\oplus Q)=G(P)\vee G(Q).
\end{eqnarray}
Let $\vec G(P)$ denote the digraph determined by the strict order relation of $P$,
with vertex set $V(\vec G(P))=P$ and arc set $A(\vec G(P))=\{(u,v)\in P\times P:u<v\}$.
For any $W\subseteq P$, let $\vec G(P)[W]$ denote the induced subdigraph of $\vec G(P)$ on $W$,
whose arc set is $A(\vec G(P))\cap(W\times W)$.
By the definition of an induced subposet, we have $\vec G(P[W])=\vec G(P)[W]$.
A bijection $\varphi:P\to Q$ is a poset isomorphism if and only if, 
for all distinct $u,v\in P$, we have
\begin{eqnarray*}
(u,v)\in A(\vec G(P))
\quad\text{if and only if}\quad
(\varphi(u),\varphi(v))\in A(\vec G(Q)).
\end{eqnarray*}
Consequently, $P\cong Q$ if and only if the digraphs $\vec G(P)$ and $\vec G(Q)$ are isomorphic.

A nonempty poset $P$ is called ordinally indecomposable 
if there do not exist two nonempty posets $R$ and $S$ such that $P\cong R\oplus S$.
Let $P$ be a nonempty poset.
The connected components of the comparability graph $G(P)$ 
determine a decomposition of $P$ as a direct sum of nonempty factors
that are indecomposable with respect to direct sums.
This decomposition is unique up to the order of its factors.
The connected components of the incomparability graph $\overline{G(P)}$, 
arranged in the order determined by the partial order on $P$, 
determine the unique decomposition of $P$ as an ordinal sum of nonempty ordinally indecomposable factors.
When $\overline{G(P)}$ is connected, this ordinal sum decomposition consists of the single factor $P$.

\begin{lemma}\label{lem:graph-decomposition}
Let $P\in\mathrm{SP}$. Then the following statements hold:
\begin{enumerate}
    \item[\textup{(i)}] For every nonempty $W\subseteq P$, we have $P[W]\in\mathrm{SP}$.
    \item[\textup{(ii)}] Suppose that $\mathcal C(G(P))=\{C_1,\ldots,C_s\}$,
    and let $P_i=P[V(C_i)]$ for $1\le i\le s$.
    Then $P=P_1+\cdots+P_s$.
    This decomposition into nonempty factors that are indecomposable
    with respect to direct sums is unique up to the order of its factors.
    \item[\textup{(iii)}]
    If $|P|\ge2$, then exactly one of $G(P)$ and $\overline{G(P)}$ is disconnected.
    Under this assumption, if $c(G(P))=1$, then the connected
    components of $\overline{G(P)}$ admit a unique ordering
    $D_1,\ldots,D_t$ such that, for every $1\le i<j\le t$,
    we have $V(D_i)\times V(D_j)\subseteq A(\vec G(P))$.
    In this case, $t\ge2$ and $P=P[V(D_1)]\oplus\cdots\oplus P[V(D_t)]$,
    where each factor is ordinally indecomposable.
\end{enumerate}
\end{lemma}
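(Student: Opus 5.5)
The plan is to prove the three parts in order, using structural induction on the construction of $P$ from $\mathbf 1$ for (i) and (iii), and the relations \eqref{eq:graph-sums} throughout.

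For (i), I induct on the number of operations used to build $P$. If $P=\mathbf 1$, the only nonempty $W$ is $P$ itself. Otherwise $P\cong R+S$ or $P\cong R\oplus S$ with $R,S\in\mathrm{SP}$. Write $W_R=W\cap R$ and $W_S=W\cap S$. If one of these is empty, then $P[W]$ is an induced subposet of $R$ or of $S$, and the induction hypothesis applies. If both are nonempty, then directly from Definition~\ref{def:poset-sums-dual},
\[
P[W]=R[W_R]+S[W_S]\quad\text{or}\quad P[W]=R[W_R]\oplus S[W_S],
\]
and both factors lie in $\mathrm{SP}$ by induction. Since isomorphism preserves induced subposets, this case is complete.

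For (ii), two elements in different components of $G(P)$ are incomparable. Hence the order on $P$ is the union of the orders on the $P_i$, which is exactly $P_1+\cdots+P_s$. Each $P_i$ has connected comparability graph, so by \eqref{eq:graph-sums} it is indecomposable with respect to $+$. For uniqueness, take any decomposition $P=Q_1+\cdots+Q_r$ into $+$-indecomposable factors. Then $G(P)=\bigcup G(Q_j)$, and each $G(Q_j)$ is connected; otherwise its components would split $Q_j$ as a nontrivial direct sum by the same argument. So the vertex sets of the $Q_j$ are exactly the component vertex sets.

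For (iii), I first show that at least one of $G(P)$ and $\overline{G(P)}$ is disconnected; this is the main point. Since $|P|\ge2$, $P$ is not $\mathbf 1$, so $P\cong R+S$ or $P\cong R\oplus S$ with $R,S$ nonempty. By \eqref{eq:graph-sums}, $G(P)$ is a disjoint union in the first case. In the second case, $\overline{G(P)}=\overline{G(R)}\cup\overline{G(S)}$ is disconnected. Next, they cannot both be disconnected, because the complement of a disconnected graph is connected: any two vertices are adjacent if they lie in different components, and share a common neighbour in another component otherwise.

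Now assume $c(G(P))=1$, so $\overline{G(P)}$ has components $D_1,\dots,D_t$ with $t\ge2$. Elements from different components are comparable. I then show that for any two components $D,D'$, either all of $V(D)\times V(D')$ consists of arcs or all of $V(D')\times V(D)$ does. Take $u\in D$, $v,v'\in D'$ with $v,v'$ incomparable, and suppose $u<v$ but $v'<u$. Then $v'<v$, a contradiction. Connectivity of $D'$ then propagates the direction to all of $D'$, and symmetrically to all of $D$. The resulting relation $D\prec D'$ is a tournament, and it is transitive by transitivity of $<$. A transitive tournament is a strict linear order, which gives the unique ordering $D_1,\dots,D_t$.

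With this ordering, $P=P[V(D_1)]\oplus\cdots\oplus P[V(D_t)]$ by definition. Each factor has connected incomparability graph, so by \eqref{eq:graph-sums} it cannot be a nontrivial ordinal sum, since that would make its incomparability graph disconnected. Hence each factor is ordinally indecomposable.
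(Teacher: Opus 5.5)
Your proof is correct. Part (ii) and the first half of (iii) follow the paper's argument: components of $G(P)$ give the direct sum, the two cases $R+S$ and $R\oplus S$ give at least one disconnected graph, and the complement of a disconnected graph is connected.

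You take a different route in part (i) and in the ordering claim of part (iii).

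\textbf{Part (i).} The paper invokes the forbidden-subposet characterization of Valdes et al.: a finite poset lies in $\mathrm{SP}$ if and only if it has no induced copy of $N$. An induced $N$ in $P[W]$ would be an induced $N$ in $P$. This relies on the nontrivial direction ``$N$-free implies series-parallel.'' Your structural induction avoids that theorem entirely. You split $W$ into $W\cap R$ and $W\cap S$ and observe that $(R+S)[W]=R[W_R]+S[W_S]$ and $(R\oplus S)[W]=R[W_R]\oplus S[W_S]$, which is self-contained.

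\textbf{Ordering in part (iii).} The paper cites the general unique ordinal-decomposition theorem of Ern\'e--Heitzig--Reinhold. You prove the needed statement directly:
\begin{enumerate}
\item elements in different components of $\overline{G(P)}$ are comparable;
\item the direction of comparison cannot flip along an incomparability edge, so by connectivity it is constant on each pair of components;
\item the resulting relation on components is a transitive tournament, hence a strict linear order.
\end{enumerate}
This gives both existence and uniqueness of $D_1,\ldots,D_t$. Ordinal indecomposability of each factor then follows from connectivity of its incomparability graph via \eqref{eq:graph-sums}.

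\textbf{What each route buys.} The paper's version is shorter on the page but rests on two external results. Yours is longer but elementary, and makes the lemma independent of the cited literature. Your ordering argument also uses only $c(G(P))=1$; the series-parallel hypothesis enters only to guarantee $t\ge 2$.
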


\begin{proof}
\textup{(i)}
Let $N$ be the four-element poset on the ground set $\{u,v,w,z\}$
whose only strict order relations are $u<v$, $w<v$, and $w<z$.
By the characterization in \cite{Valdes1981},
a finite nonempty poset belongs to $\mathrm{SP}$ if and only if
it contains no induced subposet isomorphic to $N$.
Let $W\subseteq P$ be an arbitrary nonempty subset.
Suppose that $P[W]\notin\mathrm{SP}$.
By the above characterization, there exists $U\subseteq W$
such that $(P[W])[U]\cong N$.
On the other hand, by the definition of an induced subposet,
for any $s,t\in U$, we have $s\le t$ in $(P[W])[U]$
if and only if $s\le t$ in $P[W]$, which in turn holds
if and only if $s\le t$ in $P$.
Thus, $(P[W])[U]$ and $P[U]$ have the same ground set
and the same order relation, and hence $(P[W])[U]=P[U]$.
It follows that $P[U]\cong N$, so $P$ contains an induced subposet isomorphic to $N$.
This contradicts $P\in\mathrm{SP}$ and the above characterization.
Therefore, $P[W]\in\mathrm{SP}$.
This completes the proof of \textup{(i)}.

\textup{(ii)}
Since $\mathcal C(G(P))=\{C_1,\ldots,C_s\}$,
the vertex sets of these connected components are nonempty,
pairwise disjoint, and satisfy $P=\bigcup_{i=1}^{s}V(C_i)$.
Let $P_i=P[V(C_i)]$.
By the definitions of an induced subposet and the comparability graph,
we have $G(P_i)=G(P)[V(C_i)]=C_i$.
Thus, each $P_i$ is a nonempty connected poset.
Let $i\neq j$, and take arbitrary elements $u\in V(C_i)$ and $v\in V(C_j)$.
If $u$ and $v$ are comparable in $P$, then $e=\{u,v\}\in E(G(P))$.
Hence, $u$ and $v$ belong to the same connected component, contrary to $C_i\neq C_j$.
Therefore, elements belonging to distinct $P_i$ are incomparable in $P$.
Moreover, the order relations between elements of each $P_i$ are the same as those in $P$.
It follows from the definition of the direct sum 
in Definition \ref{def:poset-sums-dual} that $P=P_1+\cdots+P_s$.

We now prove uniqueness.
Let $P=R_1+\cdots+R_t$ be another such decomposition,
where each $R_j$ is a nonempty connected poset.
By the definition of the direct sum in Definition \ref{def:poset-sums-dual}
and the definition of the comparability graph, we obtain
\begin{eqnarray*}
G(P)=G(R_1)\cup\cdots\cup G(R_t).
\end{eqnarray*}
Since each $G(R_j)$ is connected and no edge of $G(P)$ joins vertices belonging to distinct $G(R_j)$,
the graphs $G(R_1),\ldots,G(R_t)$ are precisely the connected components of $G(P)$.
By the uniqueness of the decomposition of a graph into connected components, we have $t=s$.
After reindexing the direct summands $R_1,\ldots,R_s$,
we may assume that $G(R_j)=C_j$ for all $1\leq j\leq s$.
Thus, the ground set of $R_j$ is $V(C_j)$.
Furthermore, since $P=R_1+\cdots+R_s$,
the order relations between elements of each $R_j$ are the same as those in $P$.
Hence, $R_j=P[V(C_j)]=P_j$ for all $1\leq j\leq s$.
Therefore, this direct sum decomposition is unique up to the order of its factors.
This completes the proof of \textup{(ii)}.

\textup{(iii)}
Suppose that $|P|\geq 2$.
By the recursive definition of $\mathrm{SP}$, 
there exist nonempty posets $R,S\in\mathrm{SP}$ with disjoint ground sets 
such that $P=R+S$ or $P=R\oplus S$.
If $P=R+S$, then the formula \eqref{eq:graph-sums} gives $G(P)=G(R)\cup G(S)$.
Since $R$ and $S$ are both nonempty and there are no edges between $G(R)$ and $G(S)$, 
the graph $G(P)$ is disconnected.
If $P=R\oplus S$, then the formula \eqref{eq:graph-sums} gives $G(P)=G(R)\vee G(S)$, 
and hence $\overline{G(P)}=\overline{G(R)}\cup\overline{G(S)}$.
Therefore, $\overline{G(P)}$ is disconnected.
Thus, at least one of $G(P)$ and $\overline{G(P)}$ is disconnected.

We next prove that $G(P)$ and $\overline{G(P)}$ cannot both be disconnected.
Suppose that $G(P)$ is disconnected, and let $u,v\in P$ be arbitrary distinct vertices.
If $u$ and $v$ belong to different connected components of $G(P)$,
then $\{u,v\}\notin E(G(P))$, and hence $\{u,v\}\in E(\overline{G(P)})$.
If $u$ and $v$ belong to the same connected component,
then, since $G(P)$ is disconnected, we may choose a vertex $w$ from another connected component.
In this case, $\{u,w\},\{w,v\}\notin E(G(P))$, so $\{u,w\},\{w,v\}\in E(\overline{G(P)})$.
Thus, $u,w,v$, in this order, form a path in $\overline{G(P)}$.
Consequently, any two distinct vertices of $\overline{G(P)}$
are joined by a path, and therefore $\overline{G(P)}$ is connected.
This shows that $G(P)$ and $\overline{G(P)}$ cannot both be disconnected.
Since at least one of these two graphs is disconnected,
exactly one of them is disconnected.

Now suppose that $c(G(P))=1$. Then $t=c(\overline{G(P)})\geq 2$.
For any distinct $u,v\in P$, the definitions of the comparability graph 
and the complement imply that $\{u,v\}\in E(\overline{G(P)})$ if and only if
$\{u,v\}\notin E(G(P))$, which is equivalent to $u$ and $v$ being incomparable in $P$.
Since $V(\overline{G(P)})=P$, the graph $\overline{G(P)}$ 
is precisely the incomparability graph of $P$.
By the result in \cite{ErneHeitzigReinhold2002},
every nonempty poset admits a unique decomposition
as an ordinal sum of nonempty ordinally indecomposable factors,
and the ground sets of these factors are precisely the vertex sets
of the connected components of its incomparability graph.
Therefore, the connected components of $\overline{G(P)}$
can be uniquely ordered as $D_1,\ldots,D_t$ such that
$P=P[V(D_1)]\oplus\cdots\oplus P[V(D_t)]$,
where each $P[V(D_i)]$ is nonempty and ordinally indecomposable.
By the definition of the ordinal sum, for every $1\leq i<j\leq t$, 
every $u\in V(D_i)$ and every $v\in V(D_j)$ satisfy $u<v$ in $P$.
The definition of the arc set of $\vec G(P)$ then gives $(u,v)\in A(\vec G(P))$.
Hence, 
\begin{eqnarray*}
V(D_i)\times V(D_j)\subseteq A(\vec G(P))\quad(1\leq i<j\leq t).
\end{eqnarray*}
This proves \textup{(iii)}.
\end{proof}

%There are several ways to give a poset $P$ an antimatroid structure.
%We use order ideals to define the poset antimatroid.
%A set $I$ is anorder ideal if $x\in I$ and $y< x$ implies $y\in I$.
%Dually, a subset $F$ is an order filter if $x\in F$ and $y\geq x$ implies $y\in F$.
%The feasible sets of the poset antimatroid are the order ideals of the poset.

Recall that a subset $I\subseteq P$ is an order ideal if,
for all $u\in I$ and $v\in P$, the relation $v\leq u$ implies
$v\in I$. Dually, a subset $F\subseteq P$ is an order filter if,
for all $u\in F$ and $v\in P$, the relation $u\leq v$ implies
$v\in F$. The feasible sets of $\Gamma_P$ are the order ideals
of $P$.

For an antichain $A$ of $P$ (i.e., a subset in which each pair of different elements
is incomparable), let $I(A)$ and $I^*(A)$ denote the order ideal and the order filter generated by $A$,
respectively. Thus,
\begin{eqnarray*}
I(A)=\{u\in P:u\leq v\text{ for some }v\in A\},
\end{eqnarray*}
and
\begin{eqnarray*}
I^*(A)=\{u\in P:v\leq u\text{ for some }v\in A\}.
\end{eqnarray*}

\begin{lemma}(\cite{Gordon1993})\label{lem:antichain}
Let $P$ be a finite poset, and let $\mathcal A(P)$ denote
the set of all antichains of $P$, including the empty antichain. Then
\begin{equation}\label{eq:antichain-expansion}
T(P;x,y)=\sum_{A\in\mathcal A(P)}(x-1)^{|I^*(A)|}y^{|I^*(A)|-|A|}.
\end{equation}
\end{lemma}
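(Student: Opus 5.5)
We prove the antichain expansion of Lemma \ref{lem:antichain} by grouping the subsets $S\subseteq P$ in the defining sum of $T(P;x,y)$ according to their rank-attaining ideal. For $S\subseteq P$, the order ideals contained in $S$ are closed under unions. Hence there is a unique largest one, namely the union $I_S$ of all order ideals inside $S$, and $\rho_P(S)=|I_S|$. Conversely, fix an order ideal $I$ of $P$ and let $F=P\setminus I$, which is an order filter. We claim that the sets $S$ with $I_S=I$ are exactly the sets $S=I\cup X$ with $X\subseteq F$ such that $X$ contains no minimal element of $F$. Indeed, if $X$ contains a minimal element $m$ of $F$, then every element below $m$ lies in $I$, so $I\cup\{m\}$ is a larger ideal inside $S$. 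Conversely, if some ideal $J\subseteq S$ is not contained in $I$, then a minimal element of $J\setminus I$ is a minimal element of $F$ lying in $X$. Let $A=\min(F)$; this is an antichain with $I^*(A)=F$. The map $I\mapsto \min(P\setminus I)$ is then a bijection between $\mathcal J(P)$ and $\mathcal A(P)$, with inverse $A\mapsto P\setminus I^*(A)$.

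For such $S=I\cup X$ we have $|P|-\rho_P(S)=|F|=|I^*(A)|$ and $|S|-\rho_P(S)=|X|$. Here $X$ ranges over all subsets of $F\setminus A$, a set of size $|I^*(A)|-|A|$. Therefore
\begin{eqnarray*}
T(P;x,y)=\sum_{A\in\mathcal A(P)}(x-1)^{|I^*(A)|}\sum_{X\subseteq I^*(A)\setminus A}(y-1)^{|X|}
=\sum_{A\in\mathcal A(P)}(x-1)^{|I^*(A)|}y^{|I^*(A)|-|A|},
\end{eqnarray*}
where the inner sum is evaluated by the binomial theorem, $\sum_{X\subseteq Y}(y-1)^{|X|}=y^{|Y|}$.

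The only point requiring care is the characterization of the sets $S$ with $I_S=I$ given in the first paragraph, in particular the step that a non-contained ideal forces a minimal element of $F$ into $X$. The empty antichain corresponds to $I=P$ and contributes $1$. Everything else is bookkeeping.
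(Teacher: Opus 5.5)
Your proof is correct and complete. The paper gives no proof of this lemma; it quotes it from \cite{Gordon1993}, so there is no in-paper argument to compare with. What you give is the standard kernel argument for antimatroids.

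Because order ideals are closed under unions, each $S\subseteq P$ has a unique largest ideal $I_S$, and $\rho_P(S)=|I_S|$. Grouping by $I_S$ therefore partitions $2^P$. The points I checked all hold:
\begin{itemize}
\item Your description of the class of $I$ as $\{I\cup X: X\subseteq F\setminus\min(F)\}$, with $F=P\setminus I$, is right in both directions. In the first direction, ``every element below $m$'' should read ``every element strictly below $m$'', which is what you actually use.
\item The map $I\mapsto\min(P\setminus I)$, with inverse $A\mapsto P\setminus I^*(A)$, is a bijection $\mathcal J(P)\to\mathcal A(P)$. Finiteness gives $I^*(\min F)=F$, and $\min I^*(A)=A$ because $A$ is an antichain.
\item The binomial theorem then finishes the computation.
\end{itemize}

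Compared with citing Gordon, your self-contained route also shows exactly where the poset structure enters. The only poset-specific step is that a minimal element of $J\setminus I$ is a minimal element of $P\setminus I$. In any antimatroid, the exchange axiom (ii) of Definition \ref{def:greedoid} supplies this. So the same computation gives
\[
T(\Gamma;x,y)=\sum_{F\in\mathcal F}(x-1)^{\rho(\Gamma)-|F|}y^{|E\setminus F|-|\mathrm{ext}(F)|},
\]
where $\mathrm{ext}(F)=\{e\in E\setminus F: F\cup\{e\}\in\mathcal F\}$. The lemma is the case $\Gamma=\Gamma_P$, where $\mathrm{ext}(I)=\min(P\setminus I)$.
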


\begin{lemma}\label{lem:H-inheritance}
For every $P\in\mathrm{SP}$, the family $\mathcal W(P)$
exists and is unique.
For every $W\in\mathcal W(P)$, we have
\begin{equation}\label{eq:W-inheritance}
\mathcal W(P[W])\subseteq\{U\in\mathcal W(P):U\subseteq W\}.
\end{equation}
The class $\mathrm H$ is invariant under poset isomorphism.
If $P\in\mathrm H$ and $W\in\mathcal W(P)$, then
$P[W]\in\mathrm H$.
In particular, when a poset $P\in\mathrm H$ is decomposed
as a direct sum of nonempty factors that are indecomposable
with respect to direct sums, or as an ordinal sum of nonempty
ordinally indecomposable factors, every factor belongs to $\mathrm H$.
Moreover, if $P\in\mathrm H$ and $c(G(P))\geq 2$,
then the Tutte polynomial of each factor in its direct sum
decomposition into nonempty factors that are indecomposable
with respect to direct sums is irreducible in $\mathbb Q[x,y]$.
\end{lemma}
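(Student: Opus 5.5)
The plan is to verify the claims of Lemma \ref{lem:H-inheritance} in the order they are stated, all from the closure definition of $\mathcal W(P)$.

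\emph{Existence and uniqueness.} Call a family $\mathcal F\subseteq 2^P\setminus\{\varnothing\}$ \emph{closed} if $P\in\mathcal F$ and, for every $W\in\mathcal F$ and every $C\in\mathcal C(G[W])\cup\mathcal C(\overline G[W])$, we have $V(C)\in\mathcal F$.
\begin{itemize}
\item The family $2^P\setminus\{\varnothing\}$ is closed, because components have nonempty vertex sets.
\item An intersection of closed families is closed.
\end{itemize}
Hence the intersection of all closed families is the unique smallest closed family, and this is $\mathcal W(P)$.

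\emph{The inclusion \eqref{eq:W-inheritance}.} Fix $W\in\mathcal W(P)$ and set $\mathcal F=\{U\in\mathcal W(P):U\subseteq W\}$, regarded as a family of subsets of $W$. I will show that $\mathcal F$ is closed with respect to $P[W]$; minimality then gives $\mathcal W(P[W])\subseteq\mathcal F$.
\begin{itemize}
\item $W\in\mathcal F$ by assumption.
\item Take $U\in\mathcal F$. We have $G(P[W])[U]=G(P)[U]$, and the same holds for the complements, since induced subgraphs commute with complementation.
\item Therefore every component $C$ of $G(P[W])[U]$ or $\overline{G(P[W])}[U]$ is a component of $G[U]$ or $\overline G[U]$. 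So $V(C)\in\mathcal W(P)$, and clearly $V(C)\subseteq U\subseteq W$.
\end{itemize}

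\emph{Isomorphism invariance.} Let $\varphi:P\to Q$ be an isomorphism.
\begin{itemize}
\item $\varphi$ carries comparability and incomparability graphs to each other, so the image family $\varphi(\mathcal W(P))$ is closed for $Q$. Applying the same argument to $\varphi^{-1}$ gives $\varphi(\mathcal W(P))=\mathcal W(Q)$.
\item For each such $W$, $\varphi$ restricts to an isomorphism $P[W]\cong Q[\varphi(W)]$, and it maps components to components.
\item Tutte polynomials of isomorphic posets coincide, since $\varphi$ preserves order ideals and hence the rank function.
\item $Q\in\mathrm{SP}$ because $\mathrm{SP}$ is defined up to isomorphism.
\end{itemize}
Hence the condition defining $\mathrm H$ transfers from $P$ to $Q$.

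\emph{Heredity.} Let $P\in\mathrm H$ and $W\in\mathcal W(P)$.
\begin{itemize}
\item $P[W]\in\mathrm{SP}$ by Lemma \ref{lem:graph-decomposition}(i).
\item Take $U\in\mathcal W(P[W])$. By \eqref{eq:W-inheritance}, $U\in\mathcal W(P)$.
\item $(P[W])[U]=P[U]$, and the components of $G((P[W])[U])$ are those of $G(P[U])$.
\end{itemize}
So the irreducibility condition for $P[W]$ at $U$ is exactly the condition for $P$ at $U$, which holds. Thus $P[W]\in\mathrm H$.

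\emph{Factors of the two decompositions.} By Lemma \ref{lem:graph-decomposition}(ii),(iii), the factors of the direct sum decomposition and of the ordinal sum decomposition are $P[V(C)]$ for $C$ a component of $G(P)$ or of $\overline{G(P)}$. Each such $V(C)$ lies in $\mathcal W(P)$ because $P\in\mathcal W(P)$. Heredity then gives that every factor is in $\mathrm H$, up to the isomorphism invariance already established.

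\emph{Irreducibility of direct summands.} Suppose $c(G(P))\ge2$. Apply the defining condition of $\mathrm H$ with $W=P$: each $T(P[V(C)];x,y)$ is irreducible. By uniqueness in Lemma \ref{lem:graph-decomposition}(ii), the indecomposable direct summands are exactly these $P[V(C)]$.

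\emph{Main obstacle.} The only delicate step is \eqref{eq:W-inheritance}. It needs care in identifying the components of the induced graphs of $P[W]$ with those of $P$, and in applying minimality correctly when $\mathcal W(P[W])$ is viewed inside $2^W$. Everything else is routine bookkeeping.
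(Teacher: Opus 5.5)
Your proposal is correct and follows the paper's proof essentially step for step. The shared steps are: the intersection-of-closed-families construction of $\mathcal W(P)$; the minimality argument applied to $\{U\in\mathcal W(P):U\subseteq W\}$ for \eqref{eq:W-inheritance}; transport of $\mathcal W$ along $\varphi$ and $\varphi^{-1}$ for isomorphism invariance; heredity via $(P[W])[U]=P[U]$; and the last two claims by taking $W=P$. The only cosmetic difference is that you justify $T(P;x,y)=T(Q;x,y)$ for isomorphic posets through preservation of order ideals and rank, whereas the paper uses the antichain expansion of Lemma \ref{lem:antichain}.
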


\begin{proof}
We first prove the existence and uniqueness of $\mathcal W(P)$.
Write $G=G(P)$, and let $\mathfrak F(P)$ denote the set
of all families $\mathcal K\subseteq 2^P\setminus\{\varnothing\}$
satisfying the following conditions:
$P\in\mathcal K$ and, for every $W\in\mathcal K$ and every
$C\in\mathcal C(G[W])\cup\mathcal C(\overline G[W])$, we have $V(C)\in\mathcal K$.
Since $P$ is nonempty, we have $P\in 2^P\setminus\{\varnothing\}$.
For every nonempty $W\subseteq P$, each connected component $C$
of either $G[W]$ or $\overline G[W]$ satisfies $\varnothing\neq V(C)\subseteq W\subseteq P$.
Therefore, $2^P\setminus\{\varnothing\}\in\mathfrak F(P)$,
and hence $\mathfrak F(P)\neq\varnothing$. Let
\begin{eqnarray}\label{eq:W0}
\mathcal W_0=\bigcap_{\mathcal K\in\mathfrak F(P)}\mathcal K.
\end{eqnarray}
Then $\mathcal W_0\subseteq 2^P\setminus\{\varnothing\}$.
Since every $\mathcal K\in\mathfrak F(P)$ contains $P$,
we also have $P\in\mathcal W_0$.
Take any $W\in\mathcal W_0$ and any $C\in\mathcal C(G[W])\cup\mathcal C(\overline G[W])$.
For every $\mathcal K\in\mathfrak F(P)$,
the inclusion $\mathcal W_0\subseteq\mathcal K$ implies that $W\in\mathcal K$.
By the defining conditions of $\mathcal K$, we have $V(C)\in\mathcal K$.
Since $V(C)\in\mathcal K$ for every
$\mathcal K\in\mathfrak F(P)$, the formula \eqref{eq:W0}
implies that $V(C)\in\mathcal W_0$.
Thus, $\mathcal W_0$ satisfies all the above conditions, so $\mathcal W_0\in\mathfrak F(P)$.
On the other hand, $\mathcal W_0\subseteq\mathcal K$ for every $\mathcal K\in\mathfrak F(P)$.
Hence, $\mathcal W_0$ is the smallest family with respect to inclusion satisfying the above conditions.
Therefore, $\mathcal W(P)$ exists and is unique, and $\mathcal W(P)=\mathcal W_0$.

We now prove the formula \eqref{eq:W-inheritance}.
Fix $W\in\mathcal W(P)$.
%By \textup{(i)} of Lemma \ref{lem:graph-decomposition},
%we have $P[W]\in\mathrm{SP}$.
Since $P\in\mathrm{SP}$,
part \textup{(i)} of Lemma \ref{lem:graph-decomposition}
gives $P[W]\in\mathrm{SP}$.
Let $\mathcal B=\{U\in\mathcal W(P):U\subseteq W\}$.
We verify that $\mathcal B$ satisfies the conditions required
in the definition of $\mathcal W(P[W])$.
First, $\mathcal B\subseteq 2^W\setminus\{\varnothing\}$.
Moreover, since $W\in\mathcal W(P)$, we have $W\in\mathcal B$.
Next, take any $U\in\mathcal B$.
By the definition of $\mathcal B$, we have $U\in\mathcal W(P)$ and $U\subseteq W$.
The definitions of an induced subposet and an induced subgraph give
$G(P[W])=G[W]$ and $\overline{G(P[W])}=\overline G[W]$.
Consequently, $G(P[W])[U]=(G[W])[U]=G[U],\overline{G(P[W])}[U]=(\overline G[W])[U]=\overline G[U]$.
Take any $C\in\mathcal C(G(P[W])[U])\cup\mathcal C(\overline{G(P[W])}[U])$.
By the above identities, we have $C\in\mathcal C(G[U])\cup\mathcal C(\overline G[U])$.
Since $U\in\mathcal W(P)$, the definition of $\mathcal W(P)$ implies that $V(C)\in\mathcal W(P)$.
At the same time, $V(C)\subseteq U\subseteq W$, and hence $V(C)\in\mathcal B$.

Thus, $\mathcal B$ is a family of nonempty subsets of $W$
that contains $W$ and satisfies the closure condition in the definition of $\mathcal W(P[W])$.
Since $\mathcal W(P[W])$ is the smallest such family with respect to inclusion, we obtain
\begin{eqnarray*}
\mathcal W(P[W])\subseteq\mathcal B=\{U\in\mathcal W(P):U\subseteq W\}.
\end{eqnarray*}

We next prove that $\mathrm H$ is invariant under poset isomorphism.
Let $P\in\mathrm{SP}$, let $\varphi:P\to Q$ be a poset isomorphism,
and write $G=G(P)$ and $G'=G(Q)$.
Since poset isomorphisms preserve direct sums and ordinal sums,
the recursive definition of $\mathrm{SP}$ implies that $Q\in\mathrm{SP}$.
For every $U\subseteq P$, set $\varphi(U)=\{\varphi(u):u\in U\}$.
By the definition of a poset isomorphism,
the map $u\mapsto\varphi(u)$ defines a poset isomorphism from $P[U]$ to $Q[\varphi(U)]$
and induces graph isomorphisms $G[U]\cong G'[\varphi(U)]$ and
$\overline G[U]\cong\overline{G'}[\varphi(U)]$.
Let $\mathcal K'=\{\varphi(U):U\in\mathcal W(P)\}$.
Then $\mathcal K'\subseteq 2^Q\setminus\{\varnothing\}$ and $Q=\varphi(P)\in\mathcal K'$.
Take any $U\in\mathcal W(P)$ and any $D\in\mathcal C(G'[\varphi(U)])\cup\mathcal C(\overline{G'}[\varphi(U)])$.
By the above graph isomorphisms, there exists $C\in\mathcal C(G[U])\cup\mathcal C(\overline G[U])$
such that $V(D)=\varphi(V(C))$.
The definition of $\mathcal W(P)$ gives $V(C)\in\mathcal W(P)$, and hence $V(D)\in\mathcal K'$.
Therefore, $\mathcal K'$ contains $Q$ and satisfies the closure condition in the definition of $\mathcal W(Q)$.
It follows that $\mathcal W(Q)\subseteq\mathcal K'$.
Applying the same argument to $\varphi^{-1}$ gives $\mathcal W(P)\subseteq\{\varphi^{-1}(X):X\in\mathcal W(Q)\}$.
Taking images under $\varphi$, we obtain $\mathcal K'\subseteq\mathcal W(Q)$.
Thus,% $\mathcal W(Q)=\mathcal K'=\{\varphi(U):U\in\mathcal W(P)\}$.
\begin{eqnarray*}
\mathcal W(Q)=\mathcal K'=\{\varphi(U):U\in\mathcal W(P)\}.
\end{eqnarray*}

Now assume that $P\in\mathrm H$.
Take any $X\in\mathcal W(Q)$ such that $G'[X]$ is disconnected,
and take any $D\in\mathcal C(G'[X])$.
By the equality of families established above,
there exists $U\in\mathcal W(P)$ such that $X=\varphi(U)$.
Since $\varphi$ induces a graph isomorphism
$G[U]\cong G'[X]$, the graph $G[U]$ is disconnected,
and there exists $C\in\mathcal C(G[U])$ such that
$V(D)=\varphi(V(C))$.
As $P\in\mathrm H$, the polynomial $T(P[V(C)];x,y)$
is irreducible in $\mathbb Q[x,y]$.
Moreover, the map $u\mapsto\varphi(u)$ defines a poset isomorphism
from $P[V(C)]$ to $Q[V(D)]$.
It maps antichains bijectively to antichains and preserves
both their sizes and the sizes of the order filters
they generate in the respective induced subposets.
By Lemma \ref{lem:antichain}, we therefore have
$T(Q[V(D)];x,y)=T(P[V(C)];x,y)$.
Hence, $T(Q[V(D)];x,y)$ is irreducible in $\mathbb Q[x,y]$.
Since $X$ and $D$ were arbitrary and $Q\in\mathrm{SP}$,
it follows that $Q\in\mathrm H$.
Applying the same argument to the inverse isomorphism $\varphi^{-1}$
shows that $Q\in\mathrm H$ implies $P\in\mathrm H$.
Therefore, $P\in\mathrm H$ if and only if $Q\in\mathrm H$,
so $\mathrm H$ is invariant under poset isomorphism.

We now prove that $P\in\mathrm H$ and $W\in\mathcal W(P)$
imply $P[W]\in\mathrm H$. Fix such $P$ and $W$, and write $G=G(P)$.
%We now prove that $P\in\mathrm H$ and $W\in\mathcal W(P)$ imply $P[W]\in\mathrm H$.
Since $P\in\mathrm H\subseteq\mathrm{SP}$,
part \textup{(i)} of Lemma \ref{lem:graph-decomposition} gives $P[W]\in\mathrm{SP}$.
Take any $U\in\mathcal W(P[W])$, and suppose that $G(P[W])[U]$ is disconnected.
By the formula \eqref{eq:W-inheritance}, we have $U\in\mathcal W(P)$ and $U\subseteq W$.
The definitions of an induced subposet and an induced subgraph give $G(P[W])[U]=(G[W])[U]=G[U]$.
Thus, $G[U]$ is disconnected and has the same connected components as $G(P[W])[U]$.
Take any $C\in\mathcal C(G(P[W])[U])$.
By the above graph identity, we have $C\in\mathcal C(G[U])$.
Since $P\in\mathrm H$, $U\in\mathcal W(P)$,
and $G[U]$ is disconnected, Definition \ref{def:poset-h}
implies that $T(P[V(C)];x,y)$ is irreducible in $\mathbb Q[x,y]$.
Also, $V(C)\subseteq U\subseteq W$, so the definition
of an induced subposet gives $(P[W])[V(C)]=P[V(C)]$.
Hence, $T((P[W])[V(C)];x,y)=T(P[V(C)];x,y)$,
and therefore $T((P[W])[V(C)];x,y)$ is irreducible in $\mathbb Q[x,y]$.
The above argument, together with $P[W]\in\mathrm{SP}$,
shows that $P[W]$ satisfies all the conditions of Definition \ref{def:poset-h}.
Consequently, $P[W]\in\mathrm H$.

Finally, let $P\in\mathrm H$ and write $G=G(P)$.
Since $P\in\mathcal W(P)$, $G=G[P]$, and $\overline G=\overline G[P]$,
taking $W=P$ in the definition of $\mathcal W(P)$ shows that $V(C)\in\mathcal W(P)$ 
for every $C\in\mathcal C(G)\cup\mathcal C(\overline G)$.
By the inheritance property proved above, we obtain $P[V(C)]\in\mathrm H$.
By the decomposition results stated earlier, the factors in the decomposition of $P$ 
as a direct sum of nonempty factors that are indecomposable with respect
to direct sums are precisely the posets $P[V(C)]$, where $C\in\mathcal C(G)$.
Similarly, the factors in the decomposition of $P$ as an ordinal sum 
of nonempty ordinally indecomposable factors are precisely the posets $P[V(D)]$,
where $D\in\mathcal C(\overline G)$,
and their order is uniquely determined by the order relation of $P$.
Therefore, every factor in either decomposition belongs to $\mathrm H$.

If $c(G)\geq 2$, then $G[P]$ is disconnected. Since $P\in\mathcal W(P)$ and $P\in\mathrm H$,
taking $W=P$ in Definition \ref{def:poset-h} shows that,  for every $C\in\mathcal C(G)$,
the polynomial $T(P[V(C)];x,y)$ is irreducible in $\mathbb Q[x,y]$.
Consequently, in the decomposition of $P$ as a direct sum of nonempty factors 
that are indecomposable with respect to direct sums, 
the Tutte polynomial of every factor is irreducible in $\mathbb Q[x,y]$.
This completes the proof of the lemma.
\end{proof}

\begin{lemma}(\cite{Gordon1996})\label{lem:leantichain}
A poset $R$ is an ordinal sum of two non-empty posets if and only if 
there is some positive integer $k < |R|$ such that 
there is exactly one order ideal of size $k$ in $R$.
\end{lemma}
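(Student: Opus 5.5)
We prove both directions directly from the definitions, without using the Tutte polynomial. Write $R$ for the poset and $n=|R|$.

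For the forward direction, suppose $R\cong S\oplus S'$ with $S,S'$ nonempty, and set $k=|S|$, so $1\le k<n$. The ground set of $S$ is an order ideal of size $k$, because every element of $S$ lies below every element of $S'$. Now let $I$ be any order ideal with $|I|=k$. If $I$ contains some $v\in S'$, then $I$ contains every element of $S$, since each is below $v$. Then $|I|\ge |S|+1=k+1$, a contradiction. Hence $I\subseteq S$, and comparing sizes gives $I=S$. So $S$ is the unique order ideal of size $k$.

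For the converse, suppose $I_0$ is the only order ideal of size $k$, where $0<k<n$. We aim to show $R=R[I_0]\oplus R[R\setminus I_0]$. By the definition of the ordinal sum, it suffices to show that $u<v$ for every $u\in I_0$ and $v\notin I_0$. Suppose instead that some $u\in I_0$ and $v\notin I_0$ fail to satisfy $u<v$. Since $I_0$ is an ideal and $v\notin I_0$, we cannot have $v\le u$. So $u$ and $v$ are incomparable.

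The main step is to build a second ideal of size $k$ from this incomparable pair. We use a chain of ideals passing through $I(\{v\})$, the principal ideal of $v$. The set $I_0\cap I(\{v\})$ is an ideal. The set $J=I_0\cup I(\{v\})$ is an ideal that strictly contains $I_0$, because it contains $v$. Consider the ideals $K$ with $I(\{v\})\subseteq K\subseteq J$. These range in size from $|I(\{v\})|$ up to $|J|\ge k+1$. Between any two nested ideals one can always pass by adding one minimal element of the difference at a time, so every intermediate size is attained. There are two cases.

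First suppose $|I(\{v\})|\le k$. Then some ideal $K$ with $I(\{v\})\subseteq K\subseteq J$ has $|K|=k$. This $K$ contains $v$, so $K\ne I_0$, and we have a second ideal of size $k$.

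Otherwise $|I(\{v\})|>k$. In this case take the ideal $I(\{v\})\setminus\{v\}$, which does not contain $v$ and does contain $I_0\cap I(\{v\})$. Grow it by adding minimal elements until it reaches size $k$. If $u\notin I(\{v\})$, a size-$k$ ideal built this way inside $I(\{v\})\setminus\{v\}$ omits $u$. Since $u\in I_0$, this ideal differs from $I_0$.

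It remains to check that $u\notin I(\{v\})$. This holds because $u\le v$ would mean $u<v$ (as $u\ne v$), which we have excluded. So in both cases we obtain an ideal of size $k$ different from $I_0$, contradicting uniqueness. Hence every element of $I_0$ lies below every element outside it, and $R=R[I_0]\oplus R[R\setminus I_0]$ with both parts nonempty.

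The key fact used throughout is the interpolation property: for ideals $A\subseteq B$, every size between $|A|$ and $|B|$ is attained by an ideal lying between them. To prove it, repeatedly add a minimal element of $B\setminus A$ to the current ideal; each such addition keeps the set an ideal and increases its size by one.
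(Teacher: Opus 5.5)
The paper does not prove this lemma; it quotes it from Gordon and uses it only through its dual version for order filters. Your argument is therefore a self-contained replacement, and it is essentially correct. The forward direction is fine, as is the reduction of the converse to an incomparable pair $u\in I_0$, $v\notin I_0$, and your first case is also fine.

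Your second case is misworded. $I(\{v\})\setminus\{v\}$ already has size $|I(\{v\})|-1\ge k$, so it cannot be ``grown'' to size $k$. What you need is your interpolation property applied to $A=I_0\cap I(\{v\})$ (or simply $A=\varnothing$) and $B=I(\{v\})\setminus\{v\}$. Both are ideals, $A\subseteq B$ because $v\notin I_0$, and $|A|\le k\le |B|$. This gives an ideal $K\subseteq I(\{v\})$ with $|K|=k$, and $K$ omits $u$ because $u\not\le v$, so $K\neq I_0$.

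Similarly, in your proof of the interpolation property, the element added at each step must be a minimal element of $B\setminus K$ for the current ideal $K$, not of $B\setminus A$. With that reading the step is correct.

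A shorter route avoids both the interpolation lemma and the case split. Suppose some element of $I_0$ is not below some element of $R\setminus I_0$. By finiteness, some maximal element $a$ of $I_0$ and some minimal element $b$ of $R\setminus I_0$ then satisfy $a\not<b$; otherwise $u\le a<b\le v$ for all $u\in I_0$ and $v\notin I_0$. The set $(I_0\setminus\{a\})\cup\{b\}$ is then an order ideal of size $k$:
\begin{itemize}
\item removing the maximal element $a$ from $I_0$ leaves an ideal;
\item every element strictly below $b$ lies in $I_0$ by minimality of $b$;
\item none of those elements equals $a$, since $a\not<b$.
\end{itemize}
This ideal contains $b\notin I_0$, so it is a second ideal of size $k$, and the contradiction comes in one step. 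Your approach is longer, but along the way it proves that between nested ideals every intermediate size is realized. That is the accessibility property of the poset antimatroid, which is reusable elsewhere.
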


Lemma \ref{lem:leantichain} yields the following analogous
characterization in terms of order filters.

\begin{lemma}\label{lem:unique-filter}
A poset $R$ is an ordinal sum of two nonempty posets if and only if 
there is some positive integer $k<|R|$ such that 
there is exactly one order filter of size $k$ in $R$.
\end{lemma}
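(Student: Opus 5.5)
The plan is to derive Lemma \ref{lem:unique-filter} from Lemma \ref{lem:leantichain} by passing to the dual poset $R^*$. Two elementary facts drive the argument. First, a subset $F\subseteq R$ is an order filter of $R$ exactly when it is an order ideal of $R^*$. Second, $R$ is an ordinal sum of two nonempty posets exactly when $R^*$ is. With these in hand, the statement for filters in $R$ becomes the statement of Lemma \ref{lem:leantichain} for ideals in $R^*$.

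First I verify the filter/ideal correspondence. It is immediate from the definitions: $F$ is a filter of $R$ iff for all $u\in F$ and $v\in R$, $u\le v$ in $R$ implies $v\in F$. Since $u\le v$ in $R$ is the same as $v\le u$ in $R^*$, this is exactly the condition that $F$ is an order ideal of $R^*$. The ground sets agree and $|R^*|=|R|$, so for each $k$ the number of order filters of size $k$ in $R$ equals the number of order ideals of size $k$ in $R^*$.

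Second, I check that ordinal decomposability is preserved by duality. Suppose $R=S\oplus T$ with $S,T$ nonempty. Checking the three clauses of Definition \ref{def:poset-sums-dual} shows $R^*=T^*\oplus S^*$: relations inside $S$ and inside $T$ are reversed, and every element of $T$ now lies below every element of $S$. The factors $T^*,S^*$ are nonempty. Since $(R^*)^*=R$, the converse follows by applying the same argument to $R^*$.

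The proof then chains the equivalences. $R$ is an ordinal sum of two nonempty posets iff $R^*$ is. By Lemma \ref{lem:leantichain} applied to $R^*$, this holds iff there is a positive integer $k<|R^*|=|R|$ such that $R^*$ has exactly one order ideal of size $k$. By the correspondence above, this holds iff $R$ has exactly one order filter of size $k$.

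None of these steps is a genuine obstacle. The only care needed is in the identity $(S\oplus T)^*=T^*\oplus S^*$, where the order of the summands reverses. One could also argue directly by complementation: $I$ is an ideal iff $R\setminus I$ is a filter, so the count of size-$k$ filters equals the count of size-$(|R|-k)$ ideals, and $0<k<|R|$ iff $0<|R|-k<|R|$. This route gives the result from Lemma \ref{lem:leantichain} without any duality.
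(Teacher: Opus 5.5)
Your proof is correct and matches the paper's argument: you pass to $R^*$, identify order filters of $R$ with order ideals of $R^*$, use $(S\oplus T)^*=T^*\oplus S^*$ to transfer ordinal decomposability between $R$ and $R^*$, and then apply Lemma~\ref{lem:leantichain} to $R^*$. Your closing complementation remark is also valid and gives an even shorter derivation that avoids duality: filters of size $k$ correspond to ideals of size $|R|-k$, and the range $0<k<|R|$ is preserved under $k\mapsto |R|-k$.
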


\begin{proof}
Let $R^*$ be the dual poset of $R$. By Definition \ref{def:poset-sums-dual}, 
we have $x\le y$ in $R^*$ if and only if $y\le x$ in $R$.
By the definitions of an order ideal and an order filter,
a subset $F\subseteq R$ is an order filter of $R$
if and only if the same subset, viewed as a subset of $R^*$,
is an order ideal of $R^*$.
Therefore, $R$ has exactly one order filter of size $k$
if and only if $R^*$ has exactly one order ideal of size $k$.

Also, according to Lemma \ref{lem:leantichain} and the above arguments, 
the statement that there exists a positive integer $k<|R|$ such that 
$R$ has exactly one order filter of size $k$ is equivalent to 
the statement that $R^*$ is an ordinal sum of two nonempty posets.
Such a decomposition can be written as $R^*=Q^*\oplus P^*$ 
for some nonempty posets $P^*$ and $Q^*$.
Since $Q^*\oplus P^*=(P\oplus Q)^*$ and $(R^*)^*=R$,
we have $R^*=Q^*\oplus P^*$ if and only if $R=P\oplus Q$.
Hence, $R^*$ is an ordinal sum of two nonempty posets
if and only if $R$ is an ordinal sum of two nonempty posets.
Combining these equivalences, we conclude that $R$ is an ordinal sum
of two nonempty posets if and only if there exists a positive integer
$k<|R|$ such that $R$ has exactly one order filter of size $k$.
\end{proof}

For every finite poset $P$ and every integer
$0\leq k\leq |P|$, let $b_k(P)$ denote the coefficient of
$t^k$ in $T(P;t+1,1)$. Equivalently,
\begin{eqnarray*}
T(P;t+1,1)=\sum_{k=0}^{|P|}b_k(P)t^k.
\end{eqnarray*}

\begin{lemma}\label{lem:connectivity}
Let $P\in\mathrm{SP}$ and $n=|P|\geq 2$.
Then $c(G(P))=1$ if and only if there exists an integer
$0<k<n$ such that the coefficient of $t^k$ in $T(P;t+1,1)$
is $1$.
Consequently, two nonempty series-parallel posets with the same
Tutte polynomial are either both connected or both disconnected.
\end{lemma}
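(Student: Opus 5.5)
The plan is to read off the coefficients $b_k(P)$ combinatorially from Lemma \ref{lem:antichain}, and then reduce the statement to the order-filter characterization of ordinal sums in Lemma \ref{lem:unique-filter}.

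\emph{Step 1: $b_k(P)$ counts order filters.} Substitute $x=t+1$ and $y=1$ in \eqref{eq:antichain-expansion}. The factor $y^{|I^*(A)|-|A|}$ becomes $1$, so
\[
T(P;t+1,1)=\sum_{A\in\mathcal A(P)}t^{|I^*(A)|}.
\]
The map $A\mapsto I^*(A)$ is a bijection from antichains of $P$ to order filters of $P$. Its inverse sends a filter $F$ to the set of minimal elements of $F$, and the empty antichain corresponds to the empty filter. Hence $b_k(P)$ equals the number of order filters of $P$ of size $k$. In particular $b_n(P)=1$ (the filter $P$) and $b_k(P)=0$ for $k>n$. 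So $\deg_t T(P;t+1,1)=|P|$, and the Tutte polynomial determines $|P|$.

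\emph{Step 2: connectedness is equivalent to being an ordinal sum, for $P\in\mathrm{SP}$ with $n\ge2$.}
\begin{itemize}
\item[$(\Rightarrow)$] If $c(G(P))=1$, then Lemma \ref{lem:graph-decomposition}(iii) gives $t\ge2$ and $P=P[V(D_1)]\oplus\cdots\oplus P[V(D_t)]$. Grouping the factors as $P[V(D_1)]\oplus\bigl(P[V(D_2)]\oplus\cdots\oplus P[V(D_t)]\bigr)$ exhibits $P$ as an ordinal sum of two nonempty posets.
\item[$(\Leftarrow)$] If $P\cong R\oplus S$ with $R,S$ nonempty, then by \eqref{eq:graph-sums} we have $G(P)\cong G(R)\vee G(S)$. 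A join of two nonempty graphs is connected, so $c(G(P))=1$.
\end{itemize}

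\emph{Step 3: combine.} By Lemma \ref{lem:unique-filter}, $P$ is an ordinal sum of two nonempty posets if and only if some $0<k<n$ admits exactly one order filter of size $k$. By Step 1 this says exactly that $b_k(P)=1$ for some $0<k<n$. Together with Step 2, this proves the equivalence.

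\emph{Step 4: the consequence.} Let $P,Q\in\mathrm{SP}$ be nonempty with $T(P;x,y)=T(Q;x,y)$. Then $T(P;t+1,1)=T(Q;t+1,1)$, so by Step 1 we get $|P|=|Q|$ and $b_k(P)=b_k(Q)$ for all $k$.
\begin{itemize}
\item If $|P|=|Q|=1$, both posets are connected.
\item Otherwise $|P|=|Q|\ge2$, and the criterion from Step 3 depends only on the coefficients $b_k$ and on $n$. So $P$ and $Q$ are both connected or both disconnected.
\end{itemize}

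I expect no serious obstacle. The only points needing care are checking the filter/antichain bijection cleanly in Step 1, and the direction $(\Rightarrow)$ of Step 2, which is where the series-parallel hypothesis enters through Lemma \ref{lem:graph-decomposition}(iii).
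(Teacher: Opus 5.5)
Your proof is correct and follows essentially the same route as the paper: you identify $b_k(P)$ with the number of order filters of size $k$ via the antichain expansion, use Lemma~\ref{lem:graph-decomposition}(iii) and the join structure in \eqref{eq:graph-sums} to show that connectedness is equivalent to being an ordinal sum of two nonempty posets, and then invoke Lemma~\ref{lem:unique-filter}. The only cosmetic differences are that you split off $P[V(D_1)]$ where the paper splits off $P[V(D_r)]$, and that you read off $|P|$ from $\deg_t T(P;t+1,1)$ rather than from $\deg_x T(P;x,y)$.
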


\begin{proof}
We first derive a formula for the coefficient of $t^k$ in $T(P;t+1,1)$.
For a digraph $D$ and a subset $X\subseteq V(D)$, let
\begin{eqnarray*}
\delta_D^+(X)=\{(u,v)\in A(D):u\in X,\ v\in V(D)\setminus X\}
\end{eqnarray*}
denote the outgoing cut of $D$ associated with $X$.
Since $A(\vec G(P))=\{(u,v)\in P\times P:u<v\}$, for every $F\subseteq P$, 
we have $\delta_{\vec G(P)}^+(F)=\varnothing$ if and only if
there do not exist $u\in F$ and $v\in P\setminus F$ such that $u<v$.
By the definition of an order filter, this is equivalent
to $F$ being an order filter of $P$.

Let $n=|P|$.
For each integer $0\leq k\leq n$. %let $b_k(P)$ denote the coefficient of $t^k$ in $T(P;t+1,1)$.
Substituting $x=t+1$ and $y=1$ into the formula \eqref{eq:antichain-expansion}, we obtain
\begin{eqnarray*}
T(P;t+1,1)=\sum_{A\in\mathcal A(P)}t^{|I^*(A)|}.
\end{eqnarray*}
%where $\mathcal A(P)$ denotes the set of all antichains of $P$.
Each antichain $A$ contributes $1$ to the coefficient of $t^k$
if and only if $|I^*(A)|=k$, and contributes $0$ otherwise.
Thus, $b_k(P)$ equals the number of antichains $A$ satisfying $|I^*(A)|=k$.
However, the map $A\mapsto I^*(A)$ is a bijection from the set
of antichains of $P$ to the set of order filters of $P$.
Under this bijection, the antichains satisfying $|I^*(A)|=k$
correspond precisely to the order filters of size $k$.
Hence, $b_k(P)$ equals the number of order filters of $P$ of size $k$.
Moreover, since $A(\vec G(P))=\{(u,v)\in P\times P:u<v\}$,
a subset $F\subseteq P$ is an order filter if and only if
there is no arc from $F$ to $P\setminus F$,
that is, if and only if $\delta_{\vec G(P)}^+(F)=\varnothing$. Therefore,
\begin{equation}\label{eq:b-coefficients}
b_k(P)=\bigl|\{F\subseteq P:|F|=k,\delta_{\vec G(P)}^+(F)=\varnothing\}\bigr|.
\end{equation}
In particular, $\varnothing$ and $P$ are order filters
and are the unique subsets of $P$ of sizes $0$ and $n$, respectively.
Thus, $b_0(P)=b_n(P)=1$.

We now prove the lemma. 
First, suppose that $P\in\mathrm{SP}$, $n=|P|\geq 2$, and $c(G(P))=1$.
By part \textup{(iii)} of Lemma \ref{lem:graph-decomposition},
the connected components of $\overline{G(P)}$ admit a unique ordering $D_1,\ldots,D_r$, 
where $r\geq2$, such that $P=P[V(D_1)]\oplus\cdots\oplus P[V(D_r)]$.
Let $Y=V(D_r)$ and $X=P\setminus Y$. Then $X$ and $Y$ are both nonempty.
By the above ordinal sum decomposition, we have $u<v$ for every $u\in X$ and every $v\in Y$.
Hence, $P=P[X]\oplus P[Y]$. Set $k=|Y|$, so that $0<k<n$.
Hence, by Lemma \ref{lem:unique-filter} and the above arguments,
there exists an integer $k$ with $0<k<n$
such that $P$ has exactly one order filter of size $k$.
It follows from the formula \eqref{eq:b-coefficients} that $b_k(P)=1$.

Conversely, suppose that there exists an integer $0<k<n$ such that $b_k(P)=1$.
By the formula \eqref{eq:b-coefficients}, the poset $P$ has exactly one order filter of size $k$.
Lemma \ref{lem:unique-filter} implies that there exist nonempty posets $A$ 
and $B$ with disjoint ground sets such that $P=A\oplus B$.
By the formula \eqref{eq:graph-sums}, we have $G(P)=G(A)\vee G(B)$.
Thus, $\{a,b\}\in E(G(P))$ for every $a\in A$ and every $b\in B$.
Choose $a_0\in A$, and let $C\in\mathcal C(G(P))$ be the connected component containing $a_0$.
Since every vertex of $B$ is adjacent to $a_0$, we have $B\subseteq V(C)$.
As $B\neq\varnothing$, we may choose $b_0\in B$.
Then $b_0\in V(C)$, and every vertex of $A$ is adjacent to $b_0$, so $A\subseteq V(C)$.
Therefore, by the above arguments, we have $A\cup B\subseteq V(C)$.
Since the ground set of $P$ is $A\cup B$, the graph $G(P)$ has only one connected component.
Hence, $c(G(P))=1$, proving the stated equivalence.

Finally, to prove that nonempty series-parallel posets
with the same Tutte polynomial have the same connectivity,
we first show that the polynomial determines the size of the ground set.
Let $P$ be an arbitrary nonempty finite poset, and write $n=|P|$.
By the formula \eqref{eq:antichain-expansion}, the term corresponding to an antichain $A$ is
$(x-1)^{|I^*(A)|}y^{|I^*(A)|-|A|}$.
If $A\neq\varnothing$, then $A\subseteq I^*(A)$ implies
$|I^*(A)|\geq 1$, so this term vanishes at $x=1$.
If $A=\varnothing$, then $I^*(A)=\varnothing$,
and the corresponding term is identically $1$.
Therefore, $T(P;1,y)=1$.

Since $I^*(A)\subseteq P$, every term in the above expansion has degree at most $n$ in $x$.
Hence, $\deg_x T(P;x,y)\leq n$.
By the definition of $b_k(P)$, we have $T(P;x,1)=\sum_{k=0}^{n}b_k(P)(x-1)^k$.
Since $b_n(P)=1$ and $(x-1)^k$ contains no $x^n$ term when $k<n$, 
the coefficient of $x^n$ in $T(P;x,1)$ is $1$.
Consequently, the coefficient of $x^n$ in $T(P;x,y)$,
viewed as a polynomial in $y$, takes the value $1$ at $y=1$ and is therefore not the zero polynomial.
Thus, $\deg_x T(P;x,y)\geq n$.
According to the above arguments, we obtain
\begin{equation}\label{eq:normalization-degree}
T(P;1,y)=1,\qquad \deg_x T(P;x,y)=|P|.
\end{equation}
In particular, since $T(P;1,y)=1$ and $\deg_x T(P;x,y)=|P|\geq 1$,
the Tutte polynomial of a nonempty finite poset is nonzero and nonconstant.

Now suppose that nonempty series-parallel posets $P$ and $Q$
satisfy $T(P;x,y)=T(Q;x,y)$.
By the formula \eqref{eq:normalization-degree}, we have $|P|=|Q|$.
Let $n$ denote their common size.
Substituting $x=t+1$ and $y=1$ into the polynomial identity
and comparing coefficients of $t^k$, we obtain
$b_k(P)=b_k(Q)$ for every integer $0\leq k\leq n$.
Thus, if $n\geq 2$, there exists an integer $0<k<n$
such that $b_k(P)=1$ if and only if there exists an integer
$0<k<n$ such that $b_k(Q)=1$.
Applying the connectivity criterion proved above to $P$ and $Q$,
we conclude that $c(G(P))=1$ if and only if $c(G(Q))=1$.
If $n=1$, then $G(P)$ and $G(Q)$ are both graphs
with a single vertex and are therefore connected.
Hence, nonempty series-parallel posets with the same Tutte polynomial
are either both connected or both disconnected.
\end{proof}

\begin{lemma}(\cite{Gordon1993})\label{lem:sum-formulas}
Let $P$ and $Q$ be finite posets with disjoint ground sets. Then
\begin{equation}\label{eq:direct-sum}
T(P+Q;x,y)=T(P;x,y)T(Q;x,y).
\end{equation}
\end{lemma}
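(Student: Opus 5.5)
We prove \eqref{eq:direct-sum} directly from the antichain expansion in Lemma \ref{lem:antichain}, by building a bijection between the antichains of $P+Q$ and pairs of antichains of $P$ and $Q$ under which the summands factor.

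First we describe the antichains. In $P+Q$ no element of $P$ is comparable to an element of $Q$. Hence a subset $A\subseteq P\cup Q$ is an antichain of $P+Q$ if and only if $A\cap P$ is an antichain of $P$ and $A\cap Q$ is an antichain of $Q$. It follows that the map
\begin{eqnarray*}
A\mapsto(A\cap P,\,A\cap Q)
\end{eqnarray*}
is a bijection $\mathcal A(P+Q)\to\mathcal A(P)\times\mathcal A(Q)$, with inverse $(A_1,A_2)\mapsto A_1\cup A_2$.

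Next we show that the statistics in the summand are additive. Write $A=A_1\cup A_2$ with $A_1\subseteq P$ and $A_2\subseteq Q$. The order filter generated by $A$ in $P+Q$ is $I^*_{P+Q}(A)=I^*_P(A_1)\cup I^*_Q(A_2)$. Indeed, if $v\le u$ with $v\in A$, then $u$ and $v$ lie in the same summand, and the order there agrees with the order of $P$ or of $Q$. The union is disjoint, so
\begin{eqnarray*}
|I^*_{P+Q}(A)|=|I^*_P(A_1)|+|I^*_Q(A_2)|,\qquad |A|=|A_1|+|A_2|.
\end{eqnarray*}

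Finally we substitute into \eqref{eq:antichain-expansion}. Each summand satisfies
\begin{eqnarray*}
(x-1)^{|I^*(A)|}y^{|I^*(A)|-|A|}=\Bigl((x-1)^{|I^*_P(A_1)|}y^{|I^*_P(A_1)|-|A_1|}\Bigr)\Bigl((x-1)^{|I^*_Q(A_2)|}y^{|I^*_Q(A_2)|-|A_2|}\Bigr).
\end{eqnarray*}
Summing over $\mathcal A(P)\times\mathcal A(Q)$ through the bijection gives $T(P;x,y)T(Q;x,y)$. This proves \eqref{eq:direct-sum}.

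The only point that needs care is the description of order filters in a direct sum. It reduces to the fact that comparabilities in $P+Q$ never cross between $P$ and $Q$. Alternatively, one can argue from the rank definition: order ideals of $P+Q$ are exactly the unions of ideals of $P$ and $Q$. This gives $\rho_{P+Q}(S)=\rho_P(S\cap P)+\rho_Q(S\cap Q)$, and the defining sum factors in the same way.
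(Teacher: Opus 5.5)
Your proof is correct, and it is a genuine proof where the paper has none: the paper does not prove this lemma but quotes it from Gordon (1993), so there is no in-paper argument to compare against.

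\textbf{Main route.} You derive the product formula from the antichain expansion of Lemma~\ref{lem:antichain}, using the bijection $A\mapsto(A\cap P,A\cap Q)$ from $\mathcal A(P+Q)$ onto $\mathcal A(P)\times\mathcal A(Q)$ together with the additivity of $|A|$ and $|I^*(A)|$. Each step checks. The empty-summand case also works, since $\mathcal A(\varnothing)=\{\varnothing\}$ contributes the factor $1$. Lemma~\ref{lem:antichain} is also quoted from Gordon and is not obtained from the direct-sum formula in the paper, so there is no circularity.

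\textbf{Rank-based alternative.} This is the standard greedoid argument. The poset antimatroid of $P+Q$ is the direct sum of the antimatroids of $P$ and $Q$, so $\rho_{P+Q}(S)=\rho_P(S\cap P)+\rho_Q(S\cap Q)$. The subset expansion then factors term by term under $S\mapsto(S\cap P,S\cap Q)$.

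\textbf{Comparison.} The rank-based version works directly from the defining sum and does not depend on Lemma~\ref{lem:antichain}. The antichain version is shorter once that lemma is available, and it matches how the paper later uses \eqref{eq:antichain-expansion}.
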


\begin{lemma}(\cite{Gordon1996})\label{lem:ordinal-recovery}
Let $P$ and $Q$ be ordinally indecomposable posets.
If a poset $R$ satisfies $T(R;x,y)=T(P\oplus Q;x,y)$,
then $R=P'\oplus Q'$ for some posets $P'$ and $Q'$
such that $T(P;x,y)=T(P';x,y)$ and $T(Q;x,y)=T(Q';x,y)$.
\end{lemma}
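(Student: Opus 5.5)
The plan is to pin down the ordinal cut of $R$ from the filter counts $b_k$, and then split $T(R;x,y)$ by an ordinal-sum formula that separates the two factors by degree in $x-1$.

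\textbf{Step 1: an ordinal-sum formula.} Set $n=|P|+|Q|$. The antichains of $P\oplus Q$ are of two kinds. An antichain of $Q$ (including $\varnothing$) generates the same order filter in $P\oplus Q$ as in $Q$. A nonempty antichain $A$ of $P$ generates the filter $I^*_P(A)\cup Q$. Plugging this into Lemma~\ref{lem:antichain} gives, for any finite posets,
\begin{equation*}
T(P\oplus Q;x,y)=T(Q;x,y)+\bigl((x-1)y\bigr)^{|Q|}\bigl(T(P;x,y)-1\bigr).
\end{equation*}
By the proof of Lemma~\ref{lem:connectivity}, $|R|=n$. That proof also shows, for any finite poset, that $b_k(R)$ is the number of order filters of $R$ of size $k$. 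Hence $b_k(R)=b_k(P\oplus Q)$ for all $k$.

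\textbf{Step 2: $b_k=1$ only at $k=|Q|$.} We show that for $0<k<n$, $P\oplus Q$ has exactly one filter of size $k$ if and only if $k=|Q|$.
\begin{itemize}
\item For $k=|Q|$, the only such filter is $Q$.
\item For $0<k<|Q|$, the filters of size $k$ are exactly the filters of $Q$ of size $k$. There is at least one, since a reversed linear extension gives nested filters of every size. By Lemma~\ref{lem:unique-filter} and the ordinal indecomposability of $Q$, there are not exactly one, so there are at least two.
\item For $|Q|<k<n$, the same argument applies to $P$, via filters of the form $Q\cup F'$.
\end{itemize}
Therefore $R$ has a unique order filter $F$ of size $|Q|$, with $0<|Q|<n$.

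\textbf{Step 3: the cut $R=R[R\setminus F]\oplus R[F]$.} This is the main obstacle. Lemma~\ref{lem:unique-filter} only says that $R$ is an ordinal sum somewhere; we need the cut to be exactly at $F$, so I will argue directly. Take $u$ maximal in $R\setminus F$ and $v$ minimal in $F$, and suppose $u\not<v$. Every element above $u$ other than $u$ lies in $F$ by maximality of $u$, and $v$ is not above $u$. Also $F\setminus\{v\}$ is a filter because $v$ is minimal in $F$. Hence $(F\setminus\{v\})\cup\{u\}$ is another filter of size $|Q|$, contradicting uniqueness. So every maximal element of $R\setminus F$ lies below every minimal element of $F$, and by transitivity every element of $R\setminus F$ lies below every element of $F$. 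Put $P'=R[R\setminus F]$ and $Q'=R[F]$; then $R=P'\oplus Q'$ with $|P'|=|P|$ and $|Q'|=|Q|$.

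\textbf{Step 4: separating the two factors.} Apply the Step~1 formula to both $R=P'\oplus Q'$ and $P\oplus Q$, and expand everything in $z=x-1$ and $y$.
\begin{itemize}
\item By Lemma~\ref{lem:antichain}, $T(Q;x,y)$ and $T(Q';x,y)$ have $z$-degree at most $|Q|$.
\item Every term of $T(P;x,y)-1$ and $T(P';x,y)-1$ comes from a nonempty antichain, so it contains a factor $z$. Hence $\bigl(zy\bigr)^{|Q|}\bigl(T(P;x,y)-1\bigr)$ and its primed analogue involve only monomials of $z$-degree at least $|Q|+1$.
\end{itemize}
Comparing coefficients of $z^j$ for $j\le|Q|$ gives $T(Q';x,y)=T(Q;x,y)$. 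The remaining parts then agree, and cancelling the nonzero factor $(zy)^{|Q|}$ in the domain $\mathbb Q[x,y]$ gives $T(P';x,y)=T(P;x,y)$.
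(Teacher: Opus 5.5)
Your proof is correct and complete. The paper gives no proof of this lemma; it is quoted from \cite{Gordon1996}, so there is no in-paper argument to compare against. Your argument supplies one using only the tools the paper has already set up.
\begin{itemize}
\item The ordinal-sum identity $T(P\oplus Q;x,y)=T(Q;x,y)+((x-1)y)^{|Q|}(T(P;x,y)-1)$ follows directly from Lemma~\ref{lem:antichain}.
\item The filter counts $b_k$ are preserved under equality of Tutte polynomials.
\item Your exchange argument in Step~3 is needed: as stated in the paper, Lemma~\ref{lem:unique-filter} says only that $R$ is an ordinal sum somewhere, and you correctly show the cut lies exactly at the unique filter of size $|Q|$.
\item The separation by $(x-1)$-degree in Step~4 then recovers $T(Q)$ and $T(P)$.
\end{itemize}

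One observation is worth recording. The only part of Step~2 you actually use is the first bullet, $b_{|Q|}(P\oplus Q)=1$, and this holds for arbitrary nonempty $P$ and $Q$. Steps~3 and~4 never use ordinal indecomposability either. So you have proved the lemma without that hypothesis.

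The second and third bullets, where indecomposability enters, show that $|Q|$ is the only $k$ with $0<k<n$ and $b_k=1$. They are redundant for the two-factor statement. They are, however, the right ingredient for an iterated version: the $b_k$ detect exactly the cut positions of the decomposition into ordinally indecomposable factors, and $T$ detects whether a poset is ordinally indecomposable. Case~1 of the proof of Theorem~\ref{thm:main} implicitly needs this multi-factor form to conclude $r=s$ and the factorwise equalities \eqref{eq:main-ordinal-matching}.
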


\begin{lemma}\label{lem:no-grouping}
Suppose that $P\in\mathrm{SP}$ and $c(G(P))=1$.
If $U_1,\ldots,U_m$ are nonempty posets and $m\geq 2$, then
\begin{eqnarray*}
T(P;x,y)\neq\prod_{i=1}^{m}T(U_i;x,y).
\end{eqnarray*}
\end{lemma}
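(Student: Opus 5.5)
The plan is to turn the product on the right-hand side into the Tutte polynomial of a single disconnected poset, and then separate the two sides using the coefficients $b_k$ from Lemma \ref{lem:connectivity}.

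\textbf{Reduction to one disconnected poset.} Suppose, for contradiction, that $T(P;x,y)=\prod_{i=1}^{m}T(U_i;x,y)$ with $m\ge2$ and every $U_i$ nonempty. After replacing the $U_i$ by isomorphic copies with pairwise disjoint ground sets, let $R=U_1+\cdots+U_m$. Lemma \ref{lem:antichain} shows the Tutte polynomial is unchanged under isomorphism. Repeated use of Lemma \ref{lem:sum-formulas} then gives $T(R;x,y)=T(P;x,y)$. By \eqref{eq:normalization-degree}, which holds for arbitrary nonempty finite posets, we get $|P|=|R|=\sum_i|U_i|\ge m\ge2$. Write $n=|P|$. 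Substituting $x=t+1$, $y=1$ yields $b_k(P)=b_k(R)$ for all $k$.

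\textbf{The connected side.} Since $P\in\mathrm{SP}$, $n\ge2$ and $c(G(P))=1$, Lemma \ref{lem:connectivity} provides some $0<k<n$ with $b_k(P)=1$. Hence $b_k(R)=1$ for that $k$.

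\textbf{The disconnected side.} Formula \eqref{eq:b-coefficients} is valid for any finite poset, so $b_k(R)=1$ means $R$ has exactly one order filter of size $k$. Lemma \ref{lem:unique-filter} places no series-parallel hypothesis on $R$, so it gives $R=A\oplus B$ with $A,B$ nonempty. The argument in the converse direction of the proof of Lemma \ref{lem:connectivity} uses only \eqref{eq:graph-sums}, so it shows $G(R)$ is connected. On the other hand, $G(R)=G(U_1)\cup\cdots\cup G(U_m)$ with $m\ge2$ nonempty pieces and no edges between them, so $G(R)$ is disconnected. This contradiction proves the lemma.

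\textbf{Expected obstacle.} The only real care needed is that the $U_i$, and hence $R$, are arbitrary posets, not series-parallel ones. So the two-way connectivity criterion of Lemma \ref{lem:connectivity} cannot be applied to $R$ directly. The plan avoids this by applying that criterion only to $P$, and using for $R$ just the general-poset statements: \eqref{eq:b-coefficients}, Lemma \ref{lem:unique-filter}, and \eqref{eq:graph-sums}.
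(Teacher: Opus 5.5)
Your proposal is correct and follows the paper's proof essentially step for step. You form the direct sum $U_1+\cdots+U_m$ of disjoint copies, apply Lemma~\ref{lem:connectivity} to $P$ to get some $0<k<n$ with $b_k=1$, transfer this to the direct sum via \eqref{eq:b-coefficients}, and reach a contradiction using Lemma~\ref{lem:unique-filter} and \eqref{eq:graph-sums}. Your caution that only general-poset facts may be used for the possibly non-series-parallel direct sum matches how the paper handles that point.
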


\begin{proof}
Suppose, to the contrary, that $T(P;x,y)=\prod_{i=1}^{m}T(U_i;x,y)$.
Since the Tutte polynomial is invariant under poset isomorphism,
we may assume that the ground sets of $U_1,\ldots,U_m$ are pairwise disjoint.
Let $U=U_1+\cdots+U_m$.
Repeated application of the formula \eqref{eq:direct-sum} gives $T(P;x,y)=T(U;x,y)$.
By the formula \eqref{eq:normalization-degree} and the preceding discussion, we obtain
\begin{eqnarray*}
n=|P|=|U|=\sum_{i=1}^{m}|U_i|\geq m\geq 2.
\end{eqnarray*}
Since $P\in\mathrm{SP}$ and $G(P)$ is connected,
Lemma \ref{lem:connectivity} implies that there exists an integer $0<k<n$ such that $b_k(P)=1$.
Substituting $x=t+1$ and $y=1$ into $T(P;x,y)=T(U;x,y)$ and comparing coefficients of $t^k$,
we obtain $b_k(U)=b_k(P)=1$.
By the formula \eqref{eq:b-coefficients}, the equality $b_k(U)=1$ implies that $U$ has exactly one order filter of size $k$.
Lemma \ref{lem:unique-filter} then gives nonempty posets $A$ and $B$ such that $U=A\oplus B$.
By the formula \eqref{eq:graph-sums} and the preceding discussion, we have $G(U)=G(A)\vee G(B)$.

Take any $a\in A$ and $b\in B$.
The identity $G(U)=G(A)\vee G(B)$ implies that $\{a,b\}\in E(G(U))$.
Thus, $a$ and $b$ belong to the same connected component $C$.
Since every vertex of $A$ is adjacent to $b$ and every vertex of $B$ is adjacent to $a$,
we have $V(C)=A\cup B=U$. Hence, $c(G(U))=1$.

On the other hand, $U=U_1+\cdots+U_m$ and the formula \eqref{eq:graph-sums} give $G(U)=\bigcup_{i=1}^{m}G(U_i)$.
Since every $U_i$ is nonempty, we obtain
\begin{eqnarray*}
c(G(U))=\sum_{i=1}^{m}c(G(U_i))\geq m\geq 2,
\end{eqnarray*}
contradicting $c(G(U))=1$. Consequently, $T(P;x,y)\neq\prod_{i=1}^{m}T(U_i;x,y)$.
\end{proof}

\section{Main results}

%In this section, we continue to use the notation introduced in the preceding
%section. We  prove that $\mathcal P\subseteq\mathrm H$.
%And we prove the main theorem \ref{thm:main} using the lemmas established there, 
%thereby obtaining Gordon's isomorphism characterization for the subclass $\mathcal P$
%of series-parallel posets \cite{Gordon1996} as a special case
%of the main theorem.
In this section, we prove the inclusion $\mathcal P\subseteq\mathrm H$ and establish
Theorem \ref{thm:main} using the lemmas from Section 2.

\begin{proof}[\rm\textbf{Proof of Theorem \ref{thm:main-sub}}]
%Since $\mathcal P\subseteq\mathrm{SP}$, by the main result,
%Theorem \ref{thm:main}, it suffices to prove that
%$\mathcal P\subseteq\mathrm H$.
To prove the theorem, we establish the following claim:

\textbf{Claim:}
For every $P\in\mathcal P$, every $W\in\mathcal W(P)$, and every $C\in\mathcal C(G(P[W]))$,
the polynomial $T(P[V(C)];x,y)$ is irreducible in $\mathbb Q[x,y]$.

By the definition of an induced subposet, for every nonempty $W\subseteq P$, 
we have $G(P[W])=G(P)[W]$ and $\overline{G(P[W])}=\overline{G(P)}[W]$.
We first reformulate the claim in an equivalent form that is convenient for induction:
for every $P\in\mathcal P$ and every $W\in\mathcal W(P)$,
if $G(P)[W]$ is connected, then $T(P[W];x,y)$ is irreducible in $\mathbb Q[x,y]$.

Indeed, if the claim holds and $G(P)[W]$ is connected,
then $G(P)[W]$ itself is its only connected component, with vertex set $W$.
The claim therefore directly implies that $T(P[W];x,y)$ is irreducible in $\mathbb Q[x,y]$.
Conversely, suppose that whenever $G(P)[W]$ is connected,
the polynomial $T(P[W];x,y)$ is irreducible in $\mathbb Q[x,y]$.
Take any $W\in\mathcal W(P)$ and $C\in\mathcal C(G(P)[W])$, and set $U=V(C)$.
Since $W\in\mathcal W(P)$ and $C$ is a connected component of the induced subgraph $G(P)[W]$,
the definition of $\mathcal W(P)$ implies that the vertex set $V(C)$ also belongs to $\mathcal W(P)$.
As $U=V(C)$, we have $U\in\mathcal W(P)$.
Since a connected component is the connected subgraph induced by its vertex set, 
we have $G(P)[U]=(G(P)[W])[U]=C$.
Thus, $G(P)[U]$ is connected.
Applying the assumption to $U$ and using the preceding discussion,
we conclude that $T(P[V(C)];x,y)=T(P[U];x,y)$ is irreducible.
Therefore, to prove the claim, it suffices to establish the equivalent formulation above.

To prove this equivalent formulation,
we first establish two formulas that will be needed.
Let $P,Q\in\mathrm{SP}$ be nonempty and disjoint.
By the definitions of the direct sum and the ordinal sum, we have
$G(P+Q)=G(P)\cup G(Q)$, $\overline{G(P+Q)}=\overline{G(P)}\vee\overline{G(Q)}$,
$G(P\oplus Q)=G(P)\vee G(Q)$, and $\overline{G(P\oplus Q)}=\overline{G(P)}\cup\overline{G(Q)}$.
We now prove that
\begin{align}
\mathcal W(P+Q)
&\subseteq\{P\cup Q\}\cup\mathcal W(P)\cup\mathcal W(Q),\label{eq:cor3343-W-direct}\\
\mathcal W(P\oplus Q)
&\subseteq\{P\cup Q\}\cup\mathcal W(P)\cup\mathcal W(Q).\label{eq:cor3343-W-ordinal}
\end{align}

For these disjoint nonempty posets $P,Q\in\mathrm{SP}$,
set $\mathcal K=\{P\cup Q\}\cup\mathcal W(P)\cup\mathcal W(Q)$.
Since $\mathcal W(P)\subseteq 2^P\setminus\{\varnothing\}$
and $\mathcal W(Q)\subseteq 2^Q\setminus\{\varnothing\}$,
we have $\mathcal K\subseteq 2^{P\cup Q}\setminus\{\varnothing\}$.
Also, the definition of $\mathcal K$ gives $P\cup Q\in\mathcal K$.

We next verify that $\mathcal K$ satisfies the conditions
required in the definitions of $\mathcal W(P+Q)$ and $\mathcal W(P\oplus Q)$.
Take any $U\in\mathcal K$, let $F$ be any one of the graphs
$G(P+Q)$, $\overline{G(P+Q)}$, $G(P\oplus Q)$, and $\overline{G(P\oplus Q)}$,
and take any $C\in\mathcal C(F[U])$.
We prove that $V(C)\in\mathcal K$.
According to the definition of $\mathcal K$, we consider the following three cases:

\textbf{Case 1:} $U=P\cup Q$.

In this case, $U=P\cup Q=V(F)$, so $F[U]=F$ and hence $C\in\mathcal C(F)$.
If $F=\overline{G(P+Q)}$ or $F=G(P\oplus Q)$,
then $F=\overline{G(P)}\vee\overline{G(Q)}$ or $F=G(P)\vee G(Q)$, respectively.
Since $P$ and $Q$ are both nonempty, both joins are connected.
Therefore, $C=F$, and hence $V(C)=P\cup Q\in\mathcal K$.

If $F=G(P+Q)$ or $F=\overline{G(P\oplus Q)}$, then $F=G(P)\cup G(Q)$
or $F=\overline{G(P)}\cup\overline{G(Q)}$, respectively.
By the decomposition of a disjoint union into connected components,
$C$ is a connected component of $G(P)$, $\overline{G(P)}$, $G(Q)$, or $\overline{G(Q)}$.
If $C\in\mathcal C(G(P))\cup\mathcal C(\overline{G(P)})$,
then $P\in\mathcal W(P)$ and the definition of $\mathcal W(P)$
give $V(C)\in\mathcal W(P)\subseteq\mathcal K$.
If $C\in\mathcal C(G(Q))\cup\mathcal C(\overline{G(Q)})$,
then similarly, $Q\in\mathcal W(Q)$ and the definition of $\mathcal W(Q)$ give
$V(C)\in\mathcal W(Q)\subseteq\mathcal K$.

Thus, $V(C)\in\mathcal K$ in all cases.

\textbf{Case 2:} $U\in\mathcal W(P)$.

In this case, $U\subseteq P$.
Since neither the direct sum nor the ordinal sum changes
the order relations between elements of $P$,
we have $(P+Q)[U]=(P\oplus Q)[U]=P[U]$.
Therefore, $G(P+Q)[U]=G(P\oplus Q)[U]=G(P)[U]$ and
$\overline{G(P+Q)}[U]=\overline{G(P\oplus Q)}[U]=\overline{G(P)}[U]$.
By the choice of $F$ and the preceding discussion,
$F[U]$ equals either $G(P)[U]$ or $\overline{G(P)}[U]$.
Hence, $C\in\mathcal C(G(P)[U])\cup\mathcal C(\overline{G(P)}[U])$.
Since $U\in\mathcal W(P)$, the definition of $\mathcal W(P)$ implies that
the vertex set $V(C)$ of this connected component belongs to $\mathcal W(P)$.
Therefore, $V(C)\in\mathcal W(P)\subseteq\mathcal K$.

\textbf{Case 3:} $U\in\mathcal W(Q)$.

In this case, $U\subseteq Q$, and $(P+Q)[U]=(P\oplus Q)[U]=Q[U]$.
Therefore, $G(P+Q)[U]=G(P\oplus Q)[U]=G(Q)[U]$ and
$\overline{G(P+Q)}[U]=\overline{G(P\oplus Q)}[U]=\overline{G(Q)}[U]$.
Thus, $F[U]$ equals either $G(Q)[U]$ or $\overline{G(Q)}[U]$, 
and hence $C\in\mathcal C(G(Q)[U])\cup\mathcal C(\overline{G(Q)}[U])$.
Since $U\in\mathcal W(Q)$, the definition of $\mathcal W(Q)$ gives
$V(C)\in\mathcal W(Q)\subseteq\mathcal K$.

It follows that, for every $U\in\mathcal K$,
the vertex set of every connected component of
$G(P+Q)[U]$, $\overline{G(P+Q)}[U]$,
$G(P\oplus Q)[U]$, and $\overline{G(P\oplus Q)}[U]$
belongs to $\mathcal K$.
Together with $P\cup Q\in\mathcal K$,
this shows that $\mathcal K$ satisfies all the conditions
in the definitions of both $\mathcal W(P+Q)$
and $\mathcal W(P\oplus Q)$.
Since these two families are the smallest with respect
to inclusion satisfying the respective conditions,
we obtain $\mathcal W(P+Q)\subseteq\mathcal K$
and $\mathcal W(P\oplus Q)\subseteq\mathcal K$.
This proves the formulas \eqref{eq:cor3343-W-direct}
and \eqref{eq:cor3343-W-ordinal}.

Moreover, the dual poset $P^*$ has the same ground set as $P$,
and two elements are comparable in $P^*$
if and only if they are comparable in $P$.
Therefore, $G(P^*)=G(P)$ and
$\overline{G(P^*)}=\overline{G(P)}$.
Thus, the ground sets, induced subgraphs, and connected components
used in the definitions of $\mathcal W(P^*)$ and $\mathcal W(P)$
are the same.
Since these two families are the smallest with respect
to inclusion satisfying their defining conditions,
we obtain $\mathcal W(P^*)=\mathcal W(P)$.

We first discuss the irreducibility results needed in the proof.
Let $P$ be a nonempty finite poset.
%After the substitution $y=z+1$, the polynomial used satisfies
%$f(P;t,y)=T(P;t+1,y)$ and $f(P;0,y)=1$.
%Since $f(P;0,y)=1$, the constant term of $f(P;t,y)$ is $1$,
%so it is a primitive polynomial with integer coefficients.
%By Gauss's lemma, $f(P;t,y)$ is irreducible in $\mathbb Z[t,y]$
%if and only if it is irreducible in $\mathbb Q[t,y]$.
We now recall the irreducibility results needed below.
Let $P$ be a nonempty finite poset, and define
\begin{eqnarray*}
f(P;t,y)=T(P;t+1,y).
\end{eqnarray*}
This agrees with Gordon's notation after the change of variables used in \cite{Gordon1996}.
By the formula \eqref{eq:normalization-degree}, we have $f(P;0,y)=1$.
Thus, $f(P;t,y)$ has integer coefficients and constant
coefficient $1$, so it is primitive.
Gauss's lemma and the invertible substitution $t=x-1$
give
\begin{eqnarray*}
\begin{aligned}
&f(P;t,y)\text{ is irreducible in }\mathbb Z[t,y]\\
&\quad\text{if and only if}\quad
f(P;t,y)\text{ is irreducible in }\mathbb Q[t,y]\\
&\quad\text{if and only if}\quad
T(P;x,y)\text{ is irreducible in }\mathbb Q[x,y].
\end{aligned}
\end{eqnarray*}
%Moreover, the substitution $t=x-1$ and its inverse $x=t+1$
%define a ring isomorphism between $\mathbb Q[t,y]$
%and $\mathbb Q[x,y]$ that maps $f(P;t,y)$ to $T(P;x,y)$.
%Since ring isomorphisms preserve irreducibility,
%$f(P;t,y)$ is irreducible in $\mathbb Z[t,y]$
%f and only if $T(P;x,y)$ is irreducible in $\mathbb Q[x,y]$.
The result in \cite{Gordon1996} states that $f(P;t,y)$
is irreducible in $\mathbb Z[t,y]$ if and only if
$f(P^*;t,y)$ is irreducible in the same ring.
Applying the above correspondence to $P$ and $P^*$,
we conclude that $T(P;x,y)$ is irreducible in $\mathbb Q[x,y]$
if and only if $T(P^*;x,y)$ is irreducible in the same ring.

Now let $\mathbf n$ be an antichain with $n\geq1$ elements,
and suppose that its ground set is disjoint from that of $P$.
Since $P^*$ is nonempty, the result in \cite{Gordon1996}
implies that $f(\mathbf n\oplus P^*;t,y)$
is irreducible in $\mathbb Z[t,y]$.
Applying the above correspondence to the poset
$\mathbf n\oplus P^*$, we obtain that
$T(\mathbf n\oplus P^*;x,y)$
is irreducible in $\mathbb Q[x,y]$.
Since $\mathbf n^*\cong\mathbf n$, we have
$(P\oplus\mathbf n)^*\cong\mathbf n\oplus P^*$.
Thus, the invariance of the Tutte polynomial under poset isomorphism
gives
$T((P\oplus\mathbf n)^*;x,y)=T(\mathbf n\oplus P^*;x,y)$,
so $T((P\oplus\mathbf n)^*;x,y)$ is irreducible.
Finally, applying the above result on irreducibility under duality
to the poset $P\oplus\mathbf n$, we conclude that
$T(P\oplus\mathbf n;x,y)$ is irreducible in $\mathbb Q[x,y]$.

We now prove the equivalent formulation of the claim
by structural induction on the four construction rules
defining $\mathcal P$.
This condition is invariant under poset isomorphism.
Indeed, if $\varphi:P\to Q$ is a poset isomorphism,
then, since $\mathcal W(P)$ and $\mathcal W(Q)$
are the smallest families satisfying their respective
defining conditions, we have
$\mathcal W(Q)=\{\varphi(W):W\in\mathcal W(P)\}$.
Moreover, for every $W\in\mathcal W(P)$, we have
$G(P)[W]\cong G(Q)[\varphi(W)]$
and $P[W]\cong Q[\varphi(W)]$.
Therefore, the connectivity of the corresponding induced subgraphs
and the irreducibility of the corresponding Tutte polynomials
are both preserved.
By the recursive definition of $\mathcal P$,
it suffices to show that $\mathbf1$ satisfies the equivalent
formulation of the claim and that the other three construction rules
preserve this condition.
In each induction step below, we assume that the posets used
in the current construction already satisfy this condition.
We consider the four cases as follows:

\textbf{Case 1:} The initial poset is $\mathbf1$.

In this case, $\mathcal W(\mathbf1)=\{\mathbf1\}$,
and $G(\mathbf1)$ is a graph with a single vertex.
Since $T(\mathbf1;x,y)=x$ is irreducible,
the equivalent formulation of the claim holds.

\textbf{Case 2:} Constructing $P\oplus\mathbf n$ from $P$,
where $n\geq1$.

Suppose that $P\in\mathcal P$ satisfies the induction hypothesis,
and take an antichain $\mathbf n$ whose ground set is disjoint
from that of $P$.
Take any $W\in\mathcal W(P\oplus\mathbf n)$ such that
$G(P\oplus\mathbf n)[W]$ is connected.
We show that $T((P\oplus\mathbf n)[W];x,y)$ is irreducible.

If $W=P\cup\mathbf n$, then
$(P\oplus\mathbf n)[W]=P\oplus\mathbf n$,
so the required irreducibility follows
from the result established above.

If $W\neq P\cup\mathbf n$, then the formula \eqref{eq:cor3343-W-ordinal}
gives $W\in\mathcal W(P)$ or $W\in\mathcal W(\mathbf n)$.
If $W\in\mathcal W(P)$, then
$(P\oplus\mathbf n)[W]=P[W]$,
and hence $G(P)[W]=G(P\oplus\mathbf n)[W]$ is connected.
By the induction hypothesis for $P$,
$T((P\oplus\mathbf n)[W];x,y)=T(P[W];x,y)$ is irreducible.

If $W\in\mathcal W(\mathbf n)$, then
$(P\oplus\mathbf n)[W]=\mathbf n[W]$.
Since $\mathbf n$ is an antichain,
$G(\mathbf n)[W]$ is an edgeless graph.
As $G(\mathbf n)[W]=G(P\oplus\mathbf n)[W]$ is connected
and $W\neq\varnothing$, we must have $|W|=1$.
Thus, $(P\oplus\mathbf n)[W]\cong\mathbf1$, and hence
$T((P\oplus\mathbf n)[W];x,y)=x$ is irreducible.

Therefore, $P\oplus\mathbf n$ satisfies
the equivalent formulation of the claim.

\textbf{Case 3:} Constructing $P+Q$ from $P$ and $Q$.

Suppose that $P,Q\in\mathcal P$ both satisfy the induction hypothesis
and have disjoint ground sets.
Take any $W\in\mathcal W(P+Q)$ such that
$G(P+Q)[W]$ is connected.
Since $P$ and $Q$ are both nonempty,
$G(P+Q)=G(P)\cup G(Q)$ is disconnected,
so $W\neq P\cup Q$.
By the formula \eqref{eq:cor3343-W-direct},
we have $W\in\mathcal W(P)$ or $W\in\mathcal W(Q)$.

If $W\in\mathcal W(P)$, then $(P+Q)[W]=P[W]$,
and hence $G(P)[W]=G(P+Q)[W]$ is connected.
By the induction hypothesis for $P$,
$T((P+Q)[W];x,y)=T(P[W];x,y)$ is irreducible.
If $W\in\mathcal W(Q)$, then $(P+Q)[W]=Q[W]$,
and $G(Q)[W]=G(P+Q)[W]$ is connected.
By the induction hypothesis for $Q$,
$T((P+Q)[W];x,y)=T(Q[W];x,y)$ is irreducible.
Therefore, $P+Q$ satisfies the equivalent formulation of the claim.

\textbf{Case 4:} Constructing $P^*$ from $P$.

Suppose that $P\in\mathcal P$ satisfies the induction hypothesis.
Take any $W\in\mathcal W(P^*)$ such that $G(P^*)[W]$ is connected.
The identities $\mathcal W(P^*)=\mathcal W(P)$
and $G(P^*)=G(P)$ established above imply that
$W\in\mathcal W(P)$ and $G(P)[W]$ is connected.
Thus, by the induction hypothesis for $P$,
$T(P[W];x,y)$ is irreducible.
Since $(P^*)[W]=(P[W])^*$,
applying the above result on irreducibility under duality
to the nonempty poset $P[W]$ shows that
$T((P^*)[W];x,y)$ is irreducible.
Therefore, $P^*$ satisfies the equivalent formulation of the claim.

We have shown that $\mathbf1$ satisfies
the equivalent formulation of the claim
and that the other three construction rules preserve this condition.
By induction, the equivalent formulation holds
for every $P\in\mathcal P$.
Since this condition is equivalent to the original claim,
the claim follows.

Now take any $P\in\mathcal P$.
For every $W\in\mathcal W(P)$, if $G(P[W])$ is disconnected,
then, for every $C\in\mathcal C(G(P[W]))$,
the polynomial $T(P[V(C)];x,y)$
is irreducible in $\mathbb Q[x,y]$.
Together with $P\in\mathrm{SP}$,
Definition \ref{def:poset-h} implies that $P\in\mathrm H$.
Since $P$ was arbitrary, we obtain $\mathcal P\subseteq\mathrm H$.
%Finally, let $P,Q\in\mathcal P$ and $T(P;x,y)=T(Q;x,y)$.
%Since $P\in\mathrm H$ and $Q\in\mathrm{SP}$,
%the main theorem, Theorem \ref{thm:main}, gives $P\cong Q$.
%Conversely, if $P\cong Q$, then the invariance of the Tutte polynomial
%under poset isomorphism gives $T(P;x,y)=T(Q;x,y)$.
%This proves the corollary.
\end{proof}

\begin{proof}[\rm\textbf{Proof of Theorem \ref{thm:main}}]
We first prove that, for every $P\in\mathrm H$ and every
$Q\in\mathrm{SP}$, the relation $P\cong Q$ implies $T(P;x,y)=T(Q;x,y)$.
Suppose that $P\cong Q$, and let $\varphi:P\to Q$ be a poset isomorphism.
For any $X\subseteq P$, write $\varphi(X)=\{\varphi(u):u\in X\}$.
By the definition of a poset isomorphism, for any $u,v\in P$,
the elements $u,v$ are comparable in $P$ if and only if
$\varphi(u),\varphi(v)$ are comparable in $Q$.
Thus, a subset $A\subseteq P$ is an antichain of $P$ if and only if
$\varphi(A)$ is an antichain of $Q$.
Since $\varphi$ is a bijection, the map $A\mapsto\varphi(A)$
is a bijection from the set of antichains of $P$ to the set
of antichains of $Q$, with inverse $B\mapsto\varphi^{-1}(B)$.

For any antichain $A\subseteq P$ and any $u\in P$,
the definitions of a generated order filter and a poset isomorphism
give the following equivalences:
$u\in I^*(A)$ if and only if there exists $v\in A$ such that
$v\le u$ in $P$, which holds if and only if there exists $v\in A$
such that $\varphi(v)\le\varphi(u)$ in $Q$, that is, $\varphi(u)\in I^*(\varphi(A))$.
Therefore, $\varphi(I^*(A))=I^*(\varphi(A))$, where $I^*(A)$ is generated in $P$ and
$I^*(\varphi(A))$ is generated in $Q$. Since $\varphi$ is a bijection, we have
$|\varphi(A)|=|A|$ and $|I^*(\varphi(A))|=|I^*(A)|$.
Hence, the terms corresponding to $A$ and $\varphi(A)$ in the formula \eqref{eq:antichain-expansion} are equal.
Summing over all antichains and using the bijection between the two sets of antichains yields $T(P;x,y)=T(Q;x,y)$.

We now prove the converse implication.
Suppose that $P\in\mathrm H$, $Q\in\mathrm{SP}$, and $T(P;x,y)=T(Q;x,y)$.
By the formula \eqref{eq:normalization-degree}, we obtain
\begin{eqnarray*}
|P|=\deg_x T(P;x,y)=\deg_x T(Q;x,y)=|Q|=n.
\end{eqnarray*}
We proceed by induction on $n$.
If $n=1$, then both $P$ and $Q$ are one-element posets,
and hence $P\cong Q$.
Now suppose that $n\geq2$, and assume that, for every
$R\in\mathrm H$ with $|R|<n$ and every $S\in\mathrm{SP}$,
the equality $T(R;x,y)=T(S;x,y)$ implies $R\cong S$.
Since $\mathrm H\subseteq\mathrm{SP}$,
Lemma \ref{lem:connectivity} and the equality
$T(P;x,y)=T(Q;x,y)$ imply that $G(P)$ and $G(Q)$
are either both connected or both disconnected.
We consider these two cases separately.

\textbf{Case 1:} $c(G(P))=c(G(Q))=1$.

By part \textup{(iii)} of Lemma \ref{lem:graph-decomposition},
order all connected components of $\overline{G(P)}$ and
$\overline{G(Q)}$ according to their respective partial orders as
$D_1,\ldots,D_r$ and $E_1,\ldots,E_s$, respectively, where $r,s\geq2$.
Let $P_i=P[V(D_i)]$ and $Q_j=Q[V(E_j)]$.
Then $P=P_1\oplus\cdots\oplus P_r$ and
$Q=Q_1\oplus\cdots\oplus Q_s$,
and all the factors are nonempty and ordinally indecomposable.
The equality $T(P;x,y)=T(Q;x,y)$ and  Lemma \ref{lem:ordinal-recovery} give $r=s$ and
\begin{eqnarray}\label{eq:main-ordinal-matching}
T(P_i;x,y)=T(Q_i;x,y)\quad(1\leq i\leq r).
\end{eqnarray}
Since $P\in\mathcal W(P)$ and $D_i\in\mathcal C(\overline{G(P)})$,
the definition of $\mathcal W(P)$ gives $V(D_i)\in\mathcal W(P)$.
Together with $P\in\mathrm H$, Lemma \ref{lem:H-inheritance} therefore yields $P_i=P[V(D_i)]\in\mathrm H$.
By \textup{(i)} of Lemma \ref{lem:graph-decomposition}, we have $Q_i\in\mathrm{SP}$.
Since $r\geq2$ and all the factors are nonempty, we have $1\leq|P_i|<n$.
Combining the formula \eqref{eq:main-ordinal-matching} with the induction hypothesis gives $P_i\cong Q_i$.

For each $1\leq i\leq r$, choose a poset isomorphism
$\varphi_i:P_i\to Q_i$, and define $\varphi:P\to Q$ by
$\varphi(u)=\varphi_i(u)$ for $u\in V(D_i)$.
Since $V(D_1),\ldots,V(D_r)$ are pairwise disjoint
and their union is $P$, for every $u\in P$
there is a unique index $i$ such that $u\in V(D_i)$.
Thus, $\varphi(u)=\varphi_i(u)$ is uniquely determined,
so $\varphi$ is well defined.
Moreover, each $\varphi_i$ maps $V(D_i)$ bijectively onto $V(E_i)$,
and $V(E_1),\ldots,V(E_r)$ are pairwise disjoint with union $Q$.
Therefore, $\varphi$ is a bijection.

Let $u,v\in P$ be distinct. If $u,v\in V(D_i)$, then, since $P_i$ and $Q_i$
are the corresponding induced subposets and $\varphi_i$
is a poset isomorphism, we have $(u,v)\in A(\vec G(P))$ if and only if
$(\varphi(u),\varphi(v))\in A(\vec G(Q))$.
If $u\in V(D_i)$ and $v\in V(D_j)$ with $i\neq j$,
then $\varphi(u)\in V(E_i)$ and $\varphi(v)\in V(E_j)$.
By the order of the factors in the two ordinal sum decompositions,
$(u,v)\in A(\vec G(P))$ if and only if $i<j$,
and $(\varphi(u),\varphi(v))\in A(\vec G(Q))$ if and only if $i<j$.
Thus, these two arc conditions are equivalent for all distinct $u,v\in P$.
Hence, $\varphi$ is a digraph isomorphism from $\vec G(P)$ to $\vec G(Q)$,
and consequently $P\cong Q$.

\textbf{Case 2:} $c(G(P))\geq2$ and $c(G(Q))\geq2$.

Let $\mathcal C(G(P))=\{D_1,\ldots,D_r\}$ and $\mathcal C(G(Q))=\{E_1,\ldots,E_s\}$, where $r,s\geq2$.
Set $P_i=P[V(D_i)]$ and $Q_j=Q[V(E_j)]$.
By part \textup{(ii)} of Lemma \ref{lem:graph-decomposition}, we obtain
\begin{eqnarray*}
P=P_1+\cdots+P_r,\qquad Q=Q_1+\cdots+Q_s,
\end{eqnarray*}
where all the factors are nonempty connected posets,
and $G(P_i)=D_i$, $G(Q_j)=E_j$.
Since $P\in\mathrm H$ and $c(G(P))=r\geq2$,
Lemma \ref{lem:H-inheritance} implies that each $P_i$ belongs to $\mathrm H$ and that $T(P_i;x,y)$
is irreducible in $\mathbb Q[x,y]$.
Moreover, since each $Q_j$ is a nonempty induced subposet of $Q$,
part \textup{(i)} of Lemma \ref{lem:graph-decomposition} gives $Q_j\in\mathrm{SP}$.

Repeatedly applying the direct sum formula in Lemma \ref{lem:sum-formulas}
and using $T(P;x,y)=T(Q;x,y)$, we obtain
\begin{equation}\label{eq:main-direct-product}
\prod_{i=1}^{r}T(P_i;x,y)=\prod_{j=1}^{s}T(Q_j;x,y).
\end{equation}
By the formula \eqref{eq:normalization-degree}, each $T(Q_j;x,y)$ is nonzero and nonconstant.
Furthermore, for each $D\in\mathcal C(G(P))$, the polynomial $T(P[V(D)];x,y)$ is irreducible in $\mathbb Q[x,y]$.
By the formula \eqref{eq:main-direct-product} and unique factorization in $\mathbb Q[x,y]$, there exist pairwise disjoint subsets
$\mathcal D_1,\ldots,\mathcal D_s$ of $\mathcal C(G(P))$ such that
\begin{eqnarray*}
\bigcup_{j=1}^{s}\mathcal D_j=\mathcal C(G(P)),
\end{eqnarray*}
and nonzero rational numbers $\lambda_1,\ldots,\lambda_s$ such that
\begin{equation}\label{eq:main-factor-allocation}
T(Q_j;x,y)=\lambda_j\prod_{D\in\mathcal D_j}T(P[V(D)];x,y)\quad(1\leq j\leq s).
\end{equation}
Each connected component of $G(P)$ indexes one occurrence
of an irreducible factor on the left-hand side of the formula \eqref{eq:main-direct-product}.
Distinct components with the same Tutte polynomial index distinct occurrences and are allocated separately.

If $\mathcal D_j=\varnothing$, then the above equality gives
$T(Q_j;x,y)=\lambda_j$, contradicting the fact that $T(Q_j;x,y)$ is nonconstant.
Thus, each $\mathcal D_j$ is nonempty.
Substituting $x=1$ into the formula \eqref{eq:main-factor-allocation}
and using the formula \eqref{eq:normalization-degree}, we have $T(Q_j;1,y)=1$ and $T(P[V(D)];1,y)=1$,
and hence $\lambda_j=1$. Therefore,
\begin{equation}\label{eq:main-factor-groups}
T(Q_j;x,y)=\prod_{D\in\mathcal D_j}T(P[V(D)];x,y)\quad(1\leq j\leq s).
\end{equation}
If some $\mathcal D_j$ contained at least two connected components,
then the above equality would express $T(Q_j;x,y)$ as a product of the Tutte polynomials
of at least two nonempty finite posets.
Since $Q_j\in\mathrm{SP}$ and $G(Q_j)=E_j$ is connected,
this would contradict Lemma \ref{lem:no-grouping}.
Consequently, for each $1\leq j\leq s$,
the set $\mathcal D_j$ contains exactly one connected component.
Since $\mathcal C(G(P))=\bigcup_{j=1}^{s}\mathcal D_j$, we obtain 
\begin{eqnarray*}
r=|\mathcal C(G(P))|=\sum_{j=1}^{s}|\mathcal D_j|=s.
\end{eqnarray*}
After suitably reindexing the connected components of $G(Q)$
and the corresponding direct sum factors simultaneously,
we may assume that $\mathcal D_i=\{D_i\}$.
Then by the formula \eqref{eq:main-factor-groups} and the above arguments, we have
\begin{equation}\label{eq:main-direct-matching}
T(P_i;x,y)=T(Q_i;x,y)\quad(1\leq i\leq r).
\end{equation}

Since $r\geq2$ and each $P_i$ is nonempty, we have $1\leq|P_i|<|P|=n$.
Combining $P_i\in\mathrm H$, $Q_i\in\mathrm{SP}$,
and the formula \eqref{eq:main-direct-matching}, the induction hypothesis yields $P_i\cong Q_i$.
For each $1\leq i\leq r$, choose a poset isomorphism
$\varphi_i:P_i\to Q_i$, and define $\varphi:P\to Q$ by
$\varphi(u)=\varphi_i(u)$ for $u\in V(D_i)$.
Since $V(D_1),\ldots,V(D_r)$ are pairwise disjoint
and their union is $P$, each $u\in P$ corresponds to a unique index $i$,
so $\varphi$ is well defined.
Moreover, each $\varphi_i$ maps $V(D_i)$ bijectively onto $V(E_i)$,
and $V(E_1),\ldots,V(E_r)$ are pairwise disjoint with union $Q$.
Thus, $\varphi$ is a bijection.

Let $u,v\in P$ be distinct.
If $u,v\in V(D_i)$, then the equalities
$\vec G(P_i)=\vec G(P)[V(D_i)]$ and $\vec G(Q_i)=\vec G(Q)[V(E_i)]$,
together with the fact that $\varphi_i$ is a poset isomorphism, give 
\begin{eqnarray*}
(u,v)\in A(\vec G(P))\quad\text{if and only if}\quad(\varphi(u),\varphi(v))\in A(\vec G(Q)).
\end{eqnarray*}
If $u\in V(D_i)$ and $v\in V(D_j)$ with $i\neq j$,
then $u$ and $v$ belong to different connected components of $G(P)$.
Hence, $\{u,v\}\notin E(G(P))$,
that is, they are incomparable in $P$.
Similarly, $\varphi(u)\in V(E_i)$ and $\varphi(v)\in V(E_j)$
belong to different connected components of $G(Q)$.
Thus, $\{\varphi(u),\varphi(v)\}\notin E(G(Q))$,
so they are also incomparable in $Q$.
Therefore, for all distinct $u,v\in P$, we have
\begin{eqnarray*}
(u,v)\in A(\vec G(P))\quad\text{if and only if}\quad(\varphi(u),\varphi(v))\in A(\vec G(Q)).
\end{eqnarray*}
Hence, $\varphi$ is a digraph isomorphism from $\vec G(P)$ to $\vec G(Q)$,
and consequently $P\cong Q$.

In both cases, we obtain $P\cong Q$, completing the induction.
Together with the implication established at the beginning,
this proves the theorem.
\end{proof}

\section{Conclusion}

This paper studies the problem of determining whether series-parallel posets are isomorphic from their Tutte polynomials.
Using the connected component decompositions of the comparability graph and its complement,
together with irreducibility conditions on the Tutte polynomials of the relevant induced subposets,
we define a subclass $\mathrm H$ of series-parallel posets and prove that, for every $P\in\mathrm H$ and every $Q\in\mathrm{SP}$,
the equality $T(P;x,y)=T(Q;x,y)$ holds if and only if $P\cong Q$.
We further prove that $\mathcal P\subseteq\mathrm H$, thereby recovering the result in \cite{Gordon1996}
on determining isomorphism within the class $\mathcal P$ as a special case of our main theorem.
These results provide a partial affirmative answer to Conjecture \ref{conj:reconstruction}
and show that the irreducibility of the Tutte polynomials of the relevant induced subposets plays an important role
in determining the isomorphism types of series-parallel posets.

\noindent{\bf Conflicts of Interest}

The authors declare that they have no conflicts of interest.

\noindent{\bf Data Availability}

No additional data are available.

\end{document}